\documentclass[12pt]{amsart}
\usepackage{epsfig,comment}
\usepackage{mathtools}
\usepackage[subnum]{cases}
\usepackage{enumerate}
\usepackage{bbm,bm}
\usepackage{amsmath,amssymb,mathrsfs}
\usepackage{color}
\usepackage[bookmarks=true,
bookmarksnumbered=true, breaklinks=true,
pdfstartview=FitH, hyperfigures=false,
plainpages=false, naturalnames=true,
colorlinks=true,pagebackref=true,
pdfpagelabels,linkcolor=blue,
	citecolor=black,
	filecolor=black,
	urlcolor=black]{hyperref}
\usepackage{wasysym}

\newtheorem{theorem}{Theorem}[section]

\newtheorem{lemma}[theorem]{Lemma}
\newtheorem{corollary}[theorem]{Corollary}
\newtheorem{remark}[theorem]{Remark}

\newcommand{\R}{\mathbb{R}}

\newcommand{\Sp}{\mathbb{S}^{d-1}}

\newcommand{\NN}{N^{-\frac{2}{d-1}}}
\newcommand{\MM}{M^{-\frac{2}{d-1}}}

\DeclareMathOperator{\vol}{vol}

\DeclareMathOperator{\Gr}{Gr}

\title[Dual volume approximation of the Euclidean ball]{Dual volume approximation of the Euclidean ball \\by polytopes with a fixed number of $k$-faces}

\author{Steven Hoehner, Carsten Sch\"utt and Elisabeth M. Werner}
\date{\today}

\begin{document}

\setcounter{footnote}{0}

\begin{abstract}\noindent
We study dual volume approximation of the Euclidean ball by polytopes with a prescribed number of $k$-dimensional faces. This continues the authors' previous work on  intrinsic volume approximation of the ball by polytopes with a fixed number of $k$-faces, and develops the corresponding dual radial theory. Our first main result gives a nonasymptotic lower bound for the volume deficit of an inscribed polytope $P_M\subset B_d$ with at most $M$ $k$-faces, for $0\leq k\leq \lfloor d/2\rfloor$. The estimate has the form
\[
\vol_d(B_d\setminus P_M)
\geq \frac{1-e^{-1}}{2}\kappa_d\min\left\{
1,\frac d2\left(\frac{\omega_d}{4\kappa_{d-1}}\right)^{\frac{2}{d-1}}
M^{-\frac{2}{d-1}}\right\},
\]
and, in the vertices case $k=0$, in the large-$M$ regime it recovers the order of the lower bound of Gordon, Reisner and Sch\"utt ({\it J. Approx. Theory}, 1997). We also prove the polar counterpart for the mean width excess of circumscribed polytopes with at most $M$ $k$-faces, for $\lceil d/2\rceil-1\leq k\leq d-1$. More generally, using the analytic extension of the dual volume deviations introduced by Besau, Hoehner and Kur ({\it Int. Math. Res. Not.}, 2021), we
obtain nonasymptotic lower bounds for all $q\in\mathbb R$, including $q=0$, in both the inscribed and circumscribed models. %More generally, using the analytic extension of the dual volume deviations introduced by Besau, Hoehner and Kur ({\it IMRN}, 2021), we obtain nonasymptotic lower bounds for $\widetilde\Delta_q(B_d,P_M)$ for all $q\in\mathbb R$, including the logarithmic case $q=0$.
%Finally, by combining endpoint asymptotic estimates with inequalities for the $f$-vector, we derive asymptotic lower bounds for best dual volume approximation of smooth convex bodies by polytopes with a restricted number of $k$-faces.
\end{abstract}

\maketitle

\renewcommand{\thefootnote}{}
\footnotetext{2020 \emph{Mathematics Subject Classification}: 52A27 (52A39, 52B11)}

\footnotetext{\emph{Key words and phrases}:  Approximation, polytope, Euclidean ball, dual volume, face, $f$-vector}
\renewcommand{\thefootnote}{\arabic{footnote}}
\setcounter{footnote}{0}

%%%%%%%%%%%%%%%%%%%%%%%%%%%%%%%%%
\section{Introduction}

The approximation of convex bodies by polytopes is a classical topic in convex and discrete geometry. A central problem is to quantify how well a given convex body can be approximated by a polytope when the combinatorial complexity of the approximating polytope is prescribed. Traditionally, two fundamental models have been studied: approximation by inscribed polytopes with a fixed number of vertices, and approximation by circumscribed polytopes with a fixed number of facets. These two models are dual to one another under the polarity operation, and have been investigated extensively for many natural notions of ``distance'', including the symmetric difference metric, Hausdorff distance, surface area deviation, intrinsic volume metric, and dual volume differences.

\begin{comment}
A celebrated result in this direction is due to Gruber \cite{Gruber-1988}. Let $B_d$ denote the Euclidean unit ball in $\R^d$ ($d\geq 2$) centered at the origin, and let $\mathcal{P}_N^{\rm in}(B_d)$ denote the set of all polytopes inscribed in $B_d$ with at most $N$ vertices.  Then by the result of Gruber, applied in the special case of the Euclidean ball,
\begin{equation}\label{eq:Gruber-asymptotic}
    \lim_{N\to\infty}N^{\frac{2}{d-1}}\inf_{P\in\mathcal{P}_N^{\rm in}(B_d)}\vol_d(B_d\setminus P)=\frac{1}{2}\operatorname{del}_{d-1}(\vol_{d-1}(\partial B_d))^{\frac{d+1}{d-1}}.
\end{equation}
where $\vol_d$ denotes the $d$-dimensional volume, and $\operatorname{del}_{d-1}$ is a positive constant depending only on the dimension; see Remark \ref{}. This result serves as one representative example of the type of results known in the inscribed vertices case, which has been well-studied. 

Similarly, the dual circumscribed facets case has also been well-studied. {\color{red}circumscribed mean width asymptotic formula}
\end{comment}

Much less is known, however, when the complexity of the approximating polytope is measured by the number of $k$-dimensional faces in the intermediate regime $1\leq k\leq d-2$. This  was posed as a problem by Gruber in his book (see \cite[p. 216]{GruberBook}), where he asked for results on the best approximation of convex bodies by polytopes with a restricted number of intermediate-dimensional faces. The difficulty of imposing a bound on the number of intermediate-dimensional faces is of a different nature from the classical vertex and facet restrictions. Vertices and facets are directly tied to the two basic approximation models: inscribed polytopes are controlled naturally by their vertices, while circumscribed polytopes are controlled naturally by their supporting facets. In these \emph{endpoint cases}, the geometry of the approximation is closely connected to coverings of the sphere by caps or to corresponding dual covering problems. By contrast, a restriction on the number of $k$-faces with $1\leq k\leq d-2$ is not directly a local geometric restriction of this kind. A polytope may have relatively few $k$-faces while still having many vertices or many facets, depending on the range of $k$, and the combinatorial information does not immediately translate into a spherical covering or separation estimate. 

In view of this obstruction, one possible strategy for studying intermediate $k$-face approximation involves combining geometric approximation arguments with combinatorial information about the $f$-vector. Very recently, in \cite{HSW-2026} the authors did exactly that and  obtained lower bounds for the best approximation of the Euclidean ball by polytopes with a fixed number of $k$-faces. The main point of that work was that, by combining the endpoint estimates from the cases $k=0,d-1$ with elementary inequalities for the $f$-vector of a polytope, one can extend several known vertex and facet approximation results to a substantial part of the $f$-vector. More precisely, for a polytope $P$ in $\R^d$, denote by $f_k(P)$, $k\in\{0,1,\ldots,d-1\}$, the number of $k$-dimensional faces of $P$, and by $(f_0(P),f_1(P),\ldots,f_{d-1}(P))$ the $f$-vector of $P$. The combinatorial inequalities
\begin{equation}\label{vertices-bd}
f_k(P)\geq f_0(P),\qquad 0\leq k\leq \left\lfloor\frac {d}{2}\right\rfloor,
\end{equation}
and
\begin{equation}\label{facets-bd}
f_k(P)\geq f_{d-1}(P),\qquad \left\lceil\frac {d}{2}\right\rceil-1\leq k\leq d-1,
\end{equation}
which are due to Hinman \cite{Hinman1,Hinman2}, allowed the authors in \cite{HSW-2026} to pass from estimates involving vertices or facets to estimates involving $k$-faces. In \cite{HSW-2026}, this method was combined with arguments involving polar duality and isoperimetricity to obtain lower bounds for intrinsic volume approximation, as well as for the symmetric difference metric, the surface area deviation, and the Hausdorff metric.

The purpose of the present paper is to develop the corresponding theory for radial and dual volume approximation. In contrast to the intrinsic volumes, the dual volumes are naturally expressed in terms of radial functions. In particular, if $K$ is a convex body in $\R^d$ containing the origin in its interior and $j\in\{0,1,\ldots,d\}$, then its $j$th dual volume can be written in the form
\begin{equation}
\widetilde{V}_j(K)
=V_j(B_d)\int_{\Sp}\rho_K(u)^j\,d\sigma(u),
\end{equation}
where $V_j(B_d)$ is the $j$th intrinsic volume of $B_d$ (see Section \ref{background}), $\rho_K(u)=\sup\{\lambda\geq 0:\,\lambda u\in K\}$ is the radial function of $K$, and $\sigma$ denotes normalized surface area measure on $\Sp$. Note that for inscribed polytopes $P\subset B_d$, dual volume approximation is governed by the set of directions in which the radial function $\rho_P$ is noticeably smaller than $1$. This makes the problem geometrically different from the intrinsic volume estimates obtained in \cite{HSW-2026}, where the extended isoperimetric inequality plays a central role.

Our first main result is a nonasymptotic lower bound for the volume difference of the Euclidean ball and an inscribed polytope with a fixed number of $k$-faces. Gordon, Reisner and Sch\"utt \cite{GRS-umbrellas} proved that there are absolute constants $c_1,c_2>0$ such that, for every $d\geq 2$ and every $N\geq (c_1 d)^{(d-1)/2}$, every polytope $P_N\subset B_d$ with at most $N$ vertices satisfies
\begin{equation}\label{eq:GRS-lower}
\vol_d(B_d\setminus P_N)\geq c_2 d\kappa_d N^{-\frac{2}{d-1}}.
\end{equation}
Here and throughout the paper, the Euclidean unit ball in $\R^d$ centered at the origin is denoted $B_d$, and $\kappa_d=\vol_d(B_d)$ denotes its $d$-dimensional volume. We also let $\omega_d=\vol_{d-1}(\partial B_d)$ denote the surface area of $B_d$. Note that $\omega_d=d\kappa_d$. 

Prochno, Sch\"utt and Werner \cite{PSW-2022} asked whether the large-$N$ threshold assumption is really necessary. Since the left-hand side is always bounded above by $\kappa_d$, the displayed estimate cannot hold  for all $N\geq d+1$ with an absolute constant independent of $d$. The natural nonasymptotic formulation should therefore be an estimate of the form
\[
\vol_d(B_d\setminus P_N)
\geq c\kappa_d
\min\left\{1,\frac{d}{2}\left(\frac{\omega_d}{4\kappa_{d-1}}\right)^{\frac{2}{d-1}}N^{-\frac{2}{d-1}}\right\}
\]
for some absolute constant $c>0$. We prove this estimate and, using the combinatorial inequality \eqref{vertices-bd}, we extend it from the vertices case to all $k$-faces with $0\leq k\leq \lfloor d/2\rfloor$. 
In the regime of large $N$, this recovers the order of the lower bound \eqref{eq:GRS-lower}, while the minimum gives the correct nonasymptotic formulation which holds for all admissible $N$.

More generally, we consider the analytic extension of the dual volume deviations introduced by Besau, Hoehner and Kur \cite{BHK}. For $q\neq 0$, these deviations can be expressed as
\[
\widetilde\Delta_q(K,L)
=c_1(d,q)
\int_{\Sp}
|\rho_K(u)^q-\rho_L(u)^q|\,d\sigma(u),
\]
with the normalization
\[
c_1(d,q):=
\begin{cases}
\widetilde{V}_{|q|}(B_d), & 0<|q|\leq d,\\
\kappa_d, & |q|>d.
\end{cases}
\]
Please see Section \ref{subsec:dual-volume-estimates} for the precise definitions and notation. The logarithmic case $q=0$ is described by the corresponding logarithmic radial deviation. We prove that for every inscribed polytope $P_M\subset B_d$ with at most $M$ $k$-faces and with $o\in\operatorname{int}(P_M)$,
\[
\widetilde\Delta_q(B_d,P_M)
\geq \frac{1-e^{-1}}{2}\,c_1(d,q)
\min\left\{1,
\frac{|q|}{2}\left(\frac{\omega_d}{4\kappa_{d-1}}\right)^{\frac{2}{d-1}}
M^{-\frac{2}{d-1}}
\right\}
\]
for all $q\neq 0$, with the analogous logarithmic estimate when $q=0$. Taking $q=d$ recovers the volume estimate above.

Finally, by polarity we obtain the corresponding dual statement for circumscribed polytopes. If $P_M\supset B_d$ has at most $M$ $k$-faces with $\lceil d/2\rceil-1\leq k\leq d-1$, then $P_M^\circ\subset B_d$ has at most $M$ faces of dimension $d-1-k$, which lies in $\{0,\ldots,\lfloor d/2\rfloor\}$. Applying the inscribed result to $P_M^\circ$ yields dual lower bounds for the polar body. In the special case corresponding to mean width, this gives a nonasymptotic lower bound for the mean width difference $w(P_M)-w(B_d)$. 

Thus, the present paper complements \cite{HSW-2026}: while that paper studies intrinsic volume approximation by polytopes with a fixed number of $k$-faces, the present work studies  the dual volume side of the theory.

Let us also mention that in the intermediate-dimensional range, several related results are known, mostly in low-dimensional cases. For $d=3$, asymptotic estimates have been obtained for the approximation of smooth convex bodies by inscribed polytopes with a prescribed number of edges, both for the Hausdorff metric \cite{Boroczky-Fodor-Vigh} and for the symmetric difference metric \cite{Boroczky-Gomis-Tick}. In higher dimensions, the situation is much less complete. For $d\geq 4$, K. J. B\"or\"oczky \cite{Boroczky-2000} established lower estimates, in the symmetric difference metric and the Hausdorff distance, for the approximation of smooth convex bodies by arbitrarily positioned polytopes while bounding the number of $k$-faces. In the special case of the Euclidean ball, B\"or\"oczky's result for the symmetric difference metric  was recently improved in \cite{HSW-2026} by a factor of dimension, for half of the $f$-vector of the approximating polytope. See also \cite{HSW-2026} for an analogous result for the surface area deviation of the Euclidean ball and an arbitrarily positioned polytope with a prescribed number of $k$-faces. Related asymptotic lower bounds for the intrinsic volume approximation of the Euclidean ball by inscribed and circumscribed polytopes with a prescribed number of $k$-faces were proved by Besau, Hoehner and Kur \cite{BHK}, under the assumption that the approximating polytopes are simplicial. Gruber also raised closely connected questions concerning the typical faces of best-approximating polytopes in asymptotic approximation problems \cite{GruberIV,Gruber2001}. Further contributions in this direction, especially for approximation under restrictions on the number of edges, can be found in \cite{BoroczkyTickWintsche2007,LRSW,Vigh2009}.
%%%%%%%%%%%%%%%%%%%%%%%%%%%%%%%%%%

\subsection{Overview of the paper}

The paper is organized as follows. In Section \ref{background}, we recall the background and notation that will be used throughout the paper. In Section \ref{mainresults-sec}, we state the main  lower bounds, including the volume deficit estimate, its polar mean width consequence, and the analytic dual volume estimate. The proofs of our main results are given in Section \ref{proofs-sec}. Then, in Section \ref{sec:upper-bds}, we prove that the lower bounds in Theorem \ref{inscribed-vol-kfaces} for $k=0$, and in \ref{circumscribed-mw-kfaces} for $k=d-1$, are sharp in the large-$N$ regime, up to absolute constants. Finally, in Section \ref{sec-discussion}, we conclude the paper with a brief discussion of possible extensions and related asymptotic questions.

%%%%%%%%%%%%%%%%%%%%%%%%

\section{Background and notation}\label{background}

We shall work in the $d$-dimensional Euclidean space
\[
\R^d=\{(x_1,\ldots,x_d):\,x_1,\ldots,x_d\in\R\}.
\]
Vectors in $\R^d$ are denoted by lowercase Roman letters, e.g., $x=(x_1,\ldots,x_d)\in\R^d$. The origin in $\R^d$ is denoted by $o$. For $x=(x_1,\ldots,x_d),y=(y_1,\ldots,y_d)\in\R^d$, the standard inner product on $\R^d$ is given by $\langle x,y\rangle=\sum_{i=1}^d x_i y_i$, 
and the Euclidean norm of $x$ is $\|x\|_2=\sqrt{\langle x,x\rangle}=\sqrt{\sum_{i=1}^d x_i^2}$. The $d$-dimensional Euclidean unit ball is $B_d=\{x\in\R^d:\,\|x\|_2\leq 1\}$. Its $d$-dimensional volume is given by $\kappa_d=\vol_d(B_d)=\frac{\pi^{d/2}}{\Gamma(\frac{d}{2}+1)}$, where $\Gamma(x)=\int_0^\infty t^{x-1}e^{-t}\,dt$ is the Gamma function ($x>0$).

Let $\mathcal{K}_o^d$ denote the class of convex bodies in $\R^d$ which contain the origin in their interiors. The dual  Brunn--Minkowski theory, introduced by Lutwak \cite{Lutwak-1975,Lutwak-1988,Lutwak-1990}, arises when one replaces the classical Minkowski sum of convex bodies by the radial sum.  For $K,L\in\mathcal{K}_o^d$, the \emph{radial sum} $K\widetilde{+}L$ is the body defined by
\[
\rho_{K\widetilde{+}L}=\rho_K+\rho_L.
\]
The radial sum is not necessarily convex, but it is always a star body. The \emph{dual Steiner formula} states that, for $K\in\mathcal{K}_o^d$ and $r>0$, the volume of the radial parallel body $K\widetilde{+}rB_d$ is a polynomial in $r$:
\begin{equation}\label{eq:dual-steiner}
\vol_d(K\widetilde{+}rB_d)=\sum_{j=0}^d\kappa_j\widetilde{V}_{d-j}(K)r^j.
\end{equation}
The coefficients $\widetilde{V}_0(K),\ldots,\widetilde{V}_d(K)$ defined implicitly in \eqref{eq:dual-steiner} are called the \emph{dual volumes} of $K$. Note that, in particular,  $\widetilde{V}_d(K)=\vol_d(K)$ is the usual $d$-dimensional volume of $K$. Also, in this normalization, we have $\widetilde{V}_j(B_d)=V_j(B_d)$ for $j\in\{0,1,\ldots,d\}$ where $V_j(B_d)={d\choose j}\frac{\kappa_d}{\kappa_{d-j}}$ is the $j$th intrinsic volume of $B_d$.

Using polar coordinates, we can express the dual volume as
\[
\widetilde{V}_j(K) = V_j(B_d)\int_{\Sp}\rho_K(u)^j\,d\sigma(u).
\]
Lutwak \cite{Lutwak-1979} proved the following ``dual Kubota'' formula:
\[
\widetilde{V}_j(K)={d\choose j}\frac{\kappa_d}{\kappa_j\kappa_{d-j}}\int_{\Gr(d,j)}\vol_j(K\cap E)\,d\nu_j(E).
\]
Here $\Gr(d,j)$ denotes the Grassmannian of all $j$-dimensional subspaces of $\R^d$, and $\nu_j$ denotes the unique Haar probability measure on $\Gr(d,j)$. For more background on dual volumes of convex bodies, we refer the reader to, e.g., \cite{gardner2006geometric,SchneiderBook}.

Let $\delta$ be a distance functional on the set of convex bodies in $\R^d$. For a convex body $K$ in $\R^d$ and a class $\mathcal{C}$ of polytopes in $\R^d$, we set 
\[
\delta(K,\mathcal{C}):=\inf_{P\in\mathcal{C}}\delta(K,P).
\]
When a polytope achieving the infimum exists, it is called a \emph{best-approximating polytope} of $K$ with respect to the functional $\delta$. For a convex body $K\subset\R^d$ and $N\in\mathbb{N}$, we set
\[
\mathcal{P}_N^{\rm in}(K)
:=\{P\subset K:P\text{ is a polytope and }f_0(P)\leq N\},
\]
and
\[
\mathcal{P}_N^{\rm out}(K)
:=\{P\supset K:P\text{ is a polytope and }f_{d-1}(P)\leq N\}.
\]
For background on the results on the approximation of convex bodies by polytopes which are most closely related to this paper, we refer the reader to, e.g., \cite{BH-2022,BHK,Bronshtein-survey,Hoehner-survey,HSW-2026,Hoehner2016,PSoSchW-2025,PSW-2022,Reitzner2002,SW:2003} and the references therein.

Throughout the paper, we let $c,C,c_1,C_2$, etc. denote positive absolute constants. If a constant depends on a set of parameters, then that dependence will always be expressed explicitly, e.g., $C(d)$ is a constant that depends only on $d$, and $c_2(d,q)$ is a constant that depends only on $d$ and $q$.

%%%%%%%%%%%%%%%%%%%%%%%%%%%%%
\section{Main Results}\label{mainresults-sec}

\subsection{General nonasymptotic lower bounds}

Our first result is a lower bound for the volume difference of the Euclidean ball and an inscribed polytope with a fixed number of $k$-faces.

\begin{theorem}\label{inscribed-vol-kfaces}
    Let $d\geq 2$ and $k\in\{0,1,\ldots,\lfloor\frac{d}{2}\rfloor\}$. For all polytopes $P_M\subset B_d$ with at most $M$ $k$-faces, we have
    \[
    \vol_d(B_d\setminus P_M) \geq \frac{1-e^{-1}}{2}\kappa_d
    \min\left\{
    1, \frac d2\left(\frac{\omega_d}{4\kappa_{d-1}}\right)^{\frac{2}{d-1}}\MM
    \right\}.
    \]
\end{theorem}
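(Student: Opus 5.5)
First reduce to the vertex case. By Hinman's inequality \eqref{vertices-bd}, a polytope $P_M\subset B_d$ with at most $M$ $k$-faces, $0\le k\le\lfloor d/2\rfloor$, has $N:=f_0(P_M)\le f_k(P_M)\le M$ vertices. The right-hand side of the claimed bound is nonincreasing in $M$. It therefore suffices to prove the estimate for inscribed polytopes with at most $N$ vertices, with $N$ in place of $M$. We may assume $P_N$ has nonempty interior; otherwise $\vol_d(B_d\setminus P_N)=\kappa_d$. We may also assume $o\in\operatorname{int}P_N$; if not, some closed half-space through $o$ misses $\operatorname{int}P_N$, and the deficit is at least $\kappa_d/2\ge\frac{1-e^{-1}}{2}\kappa_d$. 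Polar coordinates then give
\[
\vol_d(B_d\setminus P_N)=\kappa_d\int_{\Sp}\bigl(1-\rho_{P_N}(u)^d\bigr)\,d\sigma(u).
\]

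The geometric step is a cap covering. Let $F$ be a facet of $P_N$ with outer unit normal $v_F$ and distance $h_F<1$ from $o$. The set of directions $u$ whose ray exits through $F$ lies in the spherical cap $C_F$ cut from $\Sp$ by the hyperplane of $F$. Fix a threshold $t\in(0,1)$ and set $s=1-t/d$. If $\rho_{P_N}(u)\ge s$, then $su\in P_N$, so $u$ lies in a cap $\{w\in\Sp:\langle w,x\rangle\ge s\}$ around some point $x$ of $\partial P_N$. Using only the $N$ vertices $x_1,\dots,x_N$, I would aim for the following claim: every $u$ with $\rho_{P_N}(u)\ge s$ lies in one of the $N$ caps $\{w\in\Sp:\langle w,x_i/\|x_i\|\rangle\ge s'\}$ for a suitable $s'$ depending on $s$. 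The justification is that $su$ is a convex combination of vertices, and the vertex maximizing $\langle\cdot,u\rangle$ satisfies $\langle x_i,u\rangle\ge s$. So each direction in the good set satisfies $\langle x_i,u\rangle\ge s$ for some $i$, and hence lies in the cap of height $1-s$ in direction $x_i/\|x_i\|$, which is contained in $\{w:\langle w,x_i/\|x_i\|\rangle\ge s\}$. This gives
\[
\sigma\{u:\rho_{P_N}(u)\ge 1-t/d\}\le N\,\sigma(C(1-t/d)),
\]
where $C(s)$ is a cap of height $1-s$. The cap measure is bounded by projection onto the tangent hyperplane: the angular radius $\theta$ satisfies $\sin^2\theta=1-s^2\le 2(1-s)=2t/d$. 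Hence
\[
\sigma(C)\le\frac{\kappa_{d-1}}{\omega_d}\,\frac{(\sin\theta)^{d-1}}{\cos\theta}\lesssim\frac{\kappa_{d-1}}{\omega_d}\Bigl(\frac{2t}{d}\Bigr)^{\frac{d-1}{2}}.
\]
The factor $4$ in $\omega_d/(4\kappa_{d-1})$ should absorb the $1/\cos\theta$ factor and constants. This is the step I expect to be the main obstacle: pinning down the exact cap-area bound so that the constants $\frac d2$ and $\frac{\omega_d}{4\kappa_{d-1}}$ appear. I would first try the bound $\sigma(C)\le\frac{2\kappa_{d-1}}{\omega_d}\sin^{d-1}\theta$ for $\theta\le\pi/2$, and if necessary use the cruder estimate $\sin^2\theta\le 2(1-s)$.

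Finally, choose $t$. On the bad set, $1-\rho^d\ge 1-(1-t/d)^d\ge 1-e^{-t}$. If I choose $t$ so that the good set has measure at most $1/2$, the integral is at least $\frac12(1-e^{-t})$. Take
\[
t=\min\Bigl\{1,\ \frac d2\Bigl(\frac{\omega_d}{4\kappa_{d-1}}\Bigr)^{\frac{2}{d-1}}N^{-\frac{2}{d-1}}\Bigr\}.
\]
Then $N\sigma(C)\le\frac12$, and the concavity bound $1-e^{-t}\ge(1-e^{-1})t$ for $t\in[0,1]$ gives exactly
\[
\vol_d(B_d\setminus P_N)\ge\frac{1-e^{-1}}{2}\kappa_d\,t .
\]
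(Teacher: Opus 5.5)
Your proposal is correct and is essentially the paper's proof. Both reduce to vertices via Hinman's inequality, dispose of the case $o\notin\operatorname{int}P$ by the half-space argument, and cover the directions with $\rho_P(u)\ge 1-t/d$ by $N$ caps of height $t/d$ around the normalized vertices using the convex-combination argument. Both then conclude that the complement has measure at least $\frac12$ and integrate $1-\rho^d\ge 1-(1-t/d)^d\ge(1-e^{-1})t$ there.

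The step you flagged as the main obstacle closes exactly. Since $t\le 1$ and $d\ge 2$, you have $\cos\theta=1-t/d\ge\frac12$, so your projection bound gives $\sigma(C)\le\frac{2\kappa_{d-1}}{\omega_d}(2t/d)^{\frac{d-1}{2}}$. This is precisely the paper's cap estimate with $\eta_{d,M}=t/d$. Your fallback version ``for $\theta\le\pi/2$'' is false in general (it already fails for the hemisphere when $d=4$), but you never need it. Capping $t$ at $1$ before choosing the threshold also spares you the paper's separate check that $\eta_{d,M}\le\frac12$.
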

Thus, Theorem \ref{inscribed-vol-kfaces} identifies the capped nonasymptotic formulation of the question of Prochno, Sch\"utt and Werner \cite{PSW-2022} which was mentioned in the introduction, and resolves  the refined conjecture affirmatively. 

\begin{remark}
In the vertices case $k=0$, Theorem \ref{inscribed-vol-kfaces} recovers, up to absolute constants, the lower bound of Gordon, Reisner and Sch\"utt \cite{GRS-umbrellas} in the large-$M$ regime. Indeed, if $M\geq (c_1d)^{\frac{d-1}{2}}$ for a sufficiently large absolute constant $c_1>0$, then
\[
\frac{d}{2}\left(\frac{\omega_d}{4\kappa_{d-1}}\right)^{\frac{2}{d-1}}\MM\leq 1.
\]
Hence the minimum in Theorem \ref{inscribed-vol-kfaces} is attained by its second term, and we obtain
\[
\vol_d(B_d\setminus P_M)
\geq cd\kappa_d\MM,
\]
where $c>0$ is an absolute constant. This is the same order as the lower estimate of Gordon, Reisner and Sch\"utt for inscribed polytopes with $M$ vertices.
\end{remark}

Recall that for a convex body $K$ in $\R^d$, the \emph{mean width} of $K$ is defined by 
\[
w(K)=2\int_{\Sp}h_K(u)\,d\sigma(u),
\]
where $h_K(u)=\sup_{x\in K}\langle x,u\rangle$ is the support function of $K$ in the direction $u\in\Sp$. Our next result is a general lower bound for the mean width difference of the Euclidean ball and a circumscribed polytope with a fixed number of $k$-faces. 

\begin{theorem}\label{circumscribed-mw-kfaces}
    Let $d\geq 2$ and $k\in\{\lceil d/2\rceil-1,\ldots,d-1\}$. For all polytopes $P_M\supset B_d$ with at most $M$ $k$-faces, we have
    \[
    w(P_M)-w(B_d) \geq \frac{1-e^{-1}}{d}
    \min\left\{
    1, \frac d2\left(\frac{\omega_d}{4\kappa_{d-1}}\right)^{\frac{2}{d-1}}\MM
    \right\}.
    \]
\end{theorem}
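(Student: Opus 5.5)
The plan is to deduce Theorem \ref{circumscribed-mw-kfaces} from Theorem \ref{inscribed-vol-kfaces} by polarity, together with an elementary pointwise inequality between the radial integrands.

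First I would set up the polar body. Let $P_M\supset B_d$ have at most $M$ faces of dimension $k$, with $\lceil d/2\rceil-1\leq k\leq d-1$. Since $P_M$ is a bounded polytope containing $B_d$, its polar $P_M^\circ$ is a polytope with $o\in\operatorname{int}(P_M^\circ)$ and $P_M^\circ\subset B_d^\circ=B_d$. Polarity gives an inclusion-reversing bijection between faces, so $f_{d-1-k}(P_M^\circ)=f_k(P_M)\leq M$. The index $j:=d-1-k$ satisfies $0\leq j\leq d-\lceil d/2\rceil=\lfloor d/2\rfloor$. Hence Theorem \ref{inscribed-vol-kfaces} applies to $P_M^\circ$ with $j$ in place of $k$, and gives
\[
\vol_d(B_d\setminus P_M^\circ)\geq \frac{1-e^{-1}}{2}\kappa_d\min\left\{1,\frac d2\left(\frac{\omega_d}{4\kappa_{d-1}}\right)^{\frac{2}{d-1}}\MM\right\}.
\]

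Next I would express both quantities radially. Using $h_{P_M}(u)=\rho_{P_M^\circ}(u)^{-1}$ and $w(B_d)=2$, we get
\[
w(P_M)-w(B_d)=2\int_{\Sp}\left(\rho_{P_M^\circ}(u)^{-1}-1\right)d\sigma(u).
\]
Polar coordinates give
\[
\vol_d(B_d\setminus P_M^\circ)=\kappa_d\int_{\Sp}\left(1-\rho_{P_M^\circ}(u)^d\right)d\sigma(u).
\]

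The key step is the pointwise comparison. For $t=\rho_{P_M^\circ}(u)\in(0,1]$ we have
\[
1-t^d=(1-t)(1+t+\dots+t^{d-1})\leq d(1-t)\leq d\,\frac{1-t}{t}=d\left(t^{-1}-1\right).
\]
Integrating over $\Sp$ yields
\[
w(P_M)-w(B_d)\geq \frac{2}{d\kappa_d}\vol_d(B_d\setminus P_M^\circ).
\]
Inserting the bound from the first step turns the factor $\frac{2}{d\kappa_d}\cdot\frac{1-e^{-1}}{2}\kappa_d$ into $\frac{1-e^{-1}}{d}$, which is exactly the claimed constant.

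There is no serious obstacle. The only points needing care are the face-count correspondence under polarity, the index range $d-1-k\in\{0,\ldots,\lfloor d/2\rfloor\}$, and the requirement that $o$ lie in the interior of $P_M^\circ$, so that $\rho_{P_M^\circ}>0$ and $h_{P_M}=1/\rho_{P_M^\circ}$ holds.
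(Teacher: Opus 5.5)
Your proposal is correct and follows the paper's proof: polarity reduces the claim to Theorem \ref{inscribed-vol-kfaces} applied to $P_M^\circ$, with $d-1-k\in\{0,\ldots,\lfloor d/2\rfloor\}$, followed by the comparison $w(P_M)-w(B_d)\geq \frac{2}{\omega_d}\vol_d(B_d\setminus P_M^\circ)$. The only difference is cosmetic: you derive this comparison in polar coordinates via $1-t^d\leq d(t^{-1}-1)$, whereas the paper quotes the Glasauer--Gruber identity $w(P_M)-w(B_d)=\frac{2}{\omega_d}\int_{B_d\setminus P_M^\circ}\|x\|_2^{-(d+1)}\,dx$ and bounds the integrand below by $1$, which is the same inequality once integrated along rays.
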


\begin{remark}
By Stirling's formula, the dimensional constant appearing in Theorems \ref{inscribed-vol-kfaces} and \ref{circumscribed-mw-kfaces} satisfies
\[
\left(\frac{\omega_{d}}{4 \kappa_{d-1}}\right)^{\frac{2}{d-1}}=1+O\left(\frac{\ln d}{d}\right).
\]
\end{remark}

\begin{remark}
    Polar dual analogues of Theorems \ref{inscribed-vol-kfaces} and \ref{circumscribed-mw-kfaces} were proved in \cite{HSW-2026}. No minimum appears in their results. This is because in their result for volume, the polytope was circumscribed, so no contradiction arises in that case for small $M$, and in their inscribed mean width result, since $w(B_d)-w(P_M)\leq 2$, no contradiction arises in that case either. This is why the minimum must appear in Theorem \ref{inscribed-vol-kfaces} but not in the results of \cite{HSW-2026}. The minimum in Theorem \ref{circumscribed-mw-kfaces} appears as an artifact of our proof method, since we use the inequality in Theorem \ref{inscribed-vol-kfaces} to obtain this bound.
\end{remark}

\begin{remark}
    The preceding theorems are stated in the slightly sharper form that follows directly from the proof. If one prefers to suppress the factor $1/2$ inside the minimum, then the elementary inequality
    \[
        \forall a\geq 0,\qquad \min\{1,a/2\}\geq \frac{1}{2}\min\{1,a\},
    \]
    shows, for example, that Theorem \ref{inscribed-vol-kfaces} implies the slightly less sharp--but more compact--estimate
    \[
        \vol_d(B_d\setminus P_M)
        \geq\frac{1-e^{-1}}{4}\kappa_d
        \min\left\{
        1, d\left(\frac{\omega_d}{4\kappa_{d-1}}\right)^{\frac{2}{d-1}}\MM
        \right\}.
    \]
    Similarly, Theorem \ref{circumscribed-mw-kfaces} implies the corresponding estimate with the same replacement of $\frac d2$ by $d$, at the cost of an additional factor $1/2$ in the leading constant.
\end{remark}

%%%%%%%%%%%%%%%%%%%%%%%%%%%

\subsection{Dual volume approximation}\label{subsec:dual-volume-estimates}

We will obtain Theorem \ref{inscribed-vol-kfaces} from a more general theorem on dual volume approximation of the Euclidean ball.

Let $q\in\R$. The following analytic extension of the dual volumes was defined in \cite{BHK}. First, for $q\in[0,d]$, set
\[
    \widetilde{V}_q(B_d):=
    \pi^{\frac q2}
    \frac{\Gamma(d+1)}{\Gamma(q+1)\Gamma(d-q+1)}
    \frac{\Gamma\left(\frac{d-q}{2}+1\right)}
         {\Gamma\left(\frac d2+1\right)}.
\]
In particular, for $q=j\in\{0,1,\ldots,d\}$, this agrees with the usual intrinsic volume $V_j(B_d)$. For $K\in\mathcal K_o^d$ and $q\in\mathbb R$, define
\[
    \widetilde{V}_q(K):=
    \begin{cases}
    \displaystyle
    \widetilde{V}_{|q|}(B_d)
    \int_{\Sp}\rho_K(u)^q\,d\sigma(u),
    & \text{if } q\in[-d,d],\\[2ex]
    \displaystyle
    \kappa_d
    \int_{\Sp}\rho_K(u)^q\,d\sigma(u),
    & \text{if } q\in\mathbb R\setminus[-d,d].
    \end{cases}
\]

For $q\in\R\setminus\{0\}$, set
\[
    c_1(d,q):=\begin{cases}
        \widetilde{V}_{|q|}(B_d), &0<|q|\leq d,\\
        \kappa_d, & |q|>d.
    \end{cases}
\]
For $K,L\in\mathcal{K}_o^d$, we define the \emph{$q$th dual volume deviation} by
\begin{equation}\label{eq:q-dual-deviation-definition}
\widetilde{\Delta}_q(K,L):=c_1(d,q)\int_{\Sp}|\rho_K(u)^q-\rho_L(u)^q|\,d\sigma(u).
\end{equation}
Equivalently, for $q>0$ this can be written as
\begin{equation}\label{eq:q-positive-deviation-set-formula}
\widetilde{\Delta}_q(K,L)=\widetilde{V}_q(K)+\widetilde{V}_q(L)-2\widetilde{V}_q(K\cap L),
\end{equation}
whereas for $q<0$ it can be written as
\begin{equation}\label{eq:q-negative-deviation-set-formula}
\widetilde{\Delta}_q(K,L)=2\widetilde{V}_q(K\cap L)-\widetilde{V}_q(K)-\widetilde{V}_q(L).
\end{equation}
Indeed, if $q<0$, then the map $t\mapsto t^q$ is decreasing, so the sign in the set-theoretic formula is reversed relative to the case $q>0$.

As in \cite{BHK}, for $q=0$ we define
\[
\widehat{V}_0(K):=\int_{\Sp}\ln\rho_K(u)\,d\sigma(u).
\]
For $K,L\in\mathcal{K}_o^d$, the \emph{0th dual volume deviation} is defined by
\begin{equation}\label{eq:zero-dual-deviation-definition}
    \widehat{\Delta}_0(K,L):=\widehat{V}_0(K)+\widehat{V}_0(L)-2\widehat{V}_0(K\cap L).
\end{equation}

Next, we present our main results on dual volume approximation of the Euclidean ball by polytopes. The first result is for inscribed approximation.

\begin{theorem}\label{q-dual-volume-kfaces}
Let $d\geq 2$ and let $k\in\{0,1,\ldots,\lfloor d/2\rfloor\}$. For every polytope $P_M\subset B_d$ with at most $M$ $k$-faces and with $o\in\operatorname{int}(P_M)$, the following hold.

\begin{itemize}
    \item[(i)] If $0<|q|\leq d$, then
    \[
    \widetilde\Delta_q(B_d,P_M)
    \geq
    \frac{1-e^{-1}}{2}\widetilde{V}_{|q|}(B_d)
    \min\left\{
    1, \frac{|q|}{2}\left(\frac{\omega_d}{4\kappa_{d-1}}\right)^{\frac{2}{d-1}}
    M^{-\frac{2}{d-1}}
    \right\}.
    \]

    \item[(ii)] If $|q|>d$, then
    \[
    \widetilde\Delta_q(B_d,P_M)
    \geq
    \frac{1-e^{-1}}{2}\kappa_d
    \min\left\{
    1, \frac{|q|}{2}\left(\frac{\omega_d}{4\kappa_{d-1}}\right)^{\frac{2}{d-1}}
    M^{-\frac{2}{d-1}}
    \right\}.
    \]

    \item[(iii)] If $q=0$, then
    \[
    \widehat\Delta_0(B_d,P_M)
    \geq
    \frac{1}{4}\left(\frac{\omega_d}{4\kappa_{d-1}}\right)^{\frac{2}{d-1}}
    M^{-\frac{2}{d-1}}.
    \]
\end{itemize}
\end{theorem}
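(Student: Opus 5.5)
The plan is to reduce to the vertex case, show by a covering argument that $\rho_{P_M}$ is uniformly below $1$ on a large set of directions, and then integrate the pointwise deficit. Since $0\le k\le\lfloor d/2\rfloor$, Hinman's inequality \eqref{vertices-bd} gives $f_0(P_M)\le f_k(P_M)\le M$. So it suffices to prove all three bounds for an inscribed polytope $P=\operatorname{conv}\{v_1,\dots,v_N\}\subset B_d$ with $N\le M$ vertices and $o\in\operatorname{int}P$. The right-hand sides are decreasing in $M$, so replacing $N$ by $M$ at the end costs nothing.

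\emph{Covering step.} Fix $t\in(0,1)$ and suppose $\rho_P(u)\ge t$ for some $u\in\Sp$. Then $x=\rho_P(u)u\in P$ satisfies $\langle x,u\rangle\ge t$. Writing $x$ as a convex combination of the $v_i$, some vertex satisfies $\langle v_i,u\rangle\ge t$. Hence
\[
\{u\in\Sp:\rho_P(u)\ge t\}\subset\bigcup_{i=1}^N C_i(t),\qquad C_i(t):=\{u\in\Sp:\langle v_i,u\rangle\ge t\}.
\]
Each $C_i(t)$ is contained in a spherical cap of height $h=1-t$, because $\|v_i\|_2\le1$. I will then bound the normalized measure of such a cap by
\[
\sigma(\mathrm{cap}(h))\le\frac{2\kappa_{d-1}}{\omega_d}(2h)^{\frac{d-1}{2}}.
\]
Next I choose
\[
h=h_N:=\frac12\left(\frac{\omega_d}{4\kappa_{d-1}}\right)^{\frac2{d-1}}N^{-\frac2{d-1}},
\]
so that $N\sigma(\mathrm{cap}(h_N))\le\frac12$. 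It follows that $\rho_P(u)\le1-h_N$ on a set $A\subset\Sp$ with $\sigma(A)\ge\frac12$.

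\emph{Integration step.} For $q>0$ and $u\in A$,
\[
1-\rho_P(u)^q\ge1-(1-h)^q\ge1-e^{-qh}\ge(1-e^{-1})\min\{1,qh\}.
\]
For $q<0$,
\[
\rho_P(u)^q-1\ge(1-h)^{-|q|}-1\ge e^{|q|h}-1\ge(1-e^{-1})\min\{1,|q|h\}.
\]
Integrating over $A$ and multiplying by $c_1(d,q)$ gives
\[
\widetilde\Delta_q(B_d,P)\ge\frac{1-e^{-1}}2\,c_1(d,q)\min\{1,|q|h_N\}.
\]
This is exactly (i) and (ii), since $|q|h_N=\frac{|q|}2\bigl(\frac{\omega_d}{4\kappa_{d-1}}\bigr)^{\frac2{d-1}}N^{-\frac2{d-1}}$. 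For (iii), $P\subset B_d$ gives
\[
\widehat\Delta_0(B_d,P)=-\int_{\Sp}\ln\rho_P\,d\sigma\ge\tfrac12\bigl(-\ln(1-h_N)\bigr)\ge\tfrac12h_N,
\]
which is the stated bound.

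\emph{Main obstacle.} The delicate step is the cap estimate with precisely the constant $2\kappa_{d-1}/\omega_d$, together with the range of $h$ on which it is valid. The bound cannot hold up to the hemisphere, since $\omega_d/(2\kappa_{d-1})\sim\sqrt{2\pi d}$. I would first prove it by projecting the cap onto the base hyperplane, where the base has radius $r=\sqrt{2h-h^2}\le\sqrt{2h}$, and bounding the Jacobian $1/\cos$ by $2$ for small $h$. When $h_N$ exceeds the admissible range, I would instead run the argument with a fixed admissible height $h_0$ and $|q|h_0\ge1$, so that the minimum equals $1$ and the bound still holds. Checking this bookkeeping, and in particular that $\rho_P>0$ because $o\in\operatorname{int}P$, is the crux. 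Finally, Theorem~\ref{inscribed-vol-kfaces} is the case $q=d$: there $c_1(d,d)=\widetilde V_d(B_d)=\kappa_d$, and the assumption $o\in\operatorname{int}P_M$ can be removed by approximation or by translation-free monotonicity.
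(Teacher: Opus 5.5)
Your argument is the paper's proof. You reduce to vertices via \eqref{vertices-bd}, and cover $\{\rho_P>1-h\}$ by at most $M$ caps of height $h$ centred at the $v_i/\|v_i\|_2$. This gives $\sigma(\{\rho_P\le 1-h\})\ge\frac12$ for $h=\frac12\bigl(\frac{\omega_d}{4\kappa_{d-1}N}\bigr)^{\frac{2}{d-1}}$, and you integrate the pointwise bounds of Lemma~\ref{lem:ineq}. Your $q<0$ step via $e^{|q|h}-1$ is an equivalent variant of the paper's $(1-h)^{-|q|}-1\ge 1-(1-h)^{|q|}$.

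The one thing to fix is your ``main obstacle'' paragraph.

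\emph{The fallback fails.} Rerunning with a fixed admissible $h_0$ and invoking $|q|h_0\ge1$ does not give the claimed bounds. For $|q|<1/h_0$ the minimum is not $1$, and (iii) has no $\min\{1,\cdot\}$ to absorb the loss. In both situations you would get a strictly weaker bound than stated.

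\emph{The fallback is never needed.} The hypothesis $o\in\operatorname{int}P$ does a second job: it makes $P$ full-dimensional, so $N\ge d+1$. Since $B_d\subset B_{d-1}\times[-1,1]$, we have $\kappa_d\le 2\kappa_{d-1}$, and therefore
\[
\frac{\omega_d}{4\kappa_{d-1}}=\frac{d\kappa_d}{4\kappa_{d-1}}\le\frac d2<N.
\]
Hence $h_N\le\frac12$ always. On that range your projection argument gives exactly
\[
\vol_{d-1}(\mathrm{cap}(h))\le\frac{\kappa_{d-1}(2h-h^2)^{\frac{d-1}{2}}}{1-h}\le 2\kappa_{d-1}(2h)^{\frac{d-1}{2}}.
\]
This is the ``standard estimate'' the paper uses, after noting $\eta_{d,M}\le\frac12$ from $M\ge d+1$ and Stirling's inequality. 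With this observation your proof is complete.

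Separately, your remark on deducing Theorem~\ref{inscribed-vol-kfaces} without $o\in\operatorname{int}P_M$ needs a different route. Approximation will not handle polytopes lying far from $o$. The paper instead observes that such a $P_M$ lies in a half-ball, so the deficit is already at least $\kappa_d/2$.
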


The proof is elementary and is based on a sphere covering argument. Let us briefly describe the main ideas. If $P_M\subset B_d$ has at most $M$ vertices, then there is a set of directions whose spherical measure is bounded below by a positive absolute constant and for which $\rho_{P_M}(u)\leq 1-\eta_{d,M}$, where 
$\eta_{d,M}\asymp
\left(\frac{\omega_d}{\kappa_{d-1}M}\right)^{2/(d-1)}$. 
Integrating the radial deficit over this set yields the desired volume estimate. The same argument gives us, without additional work, lower bounds for all analytic dual volume deviations.

The other main result of this section concerns dual volume approximation of the ball by circumscribed polytopes.

\begin{theorem}\label{q-dual-volume-circ-kfaces}
Let $d\geq 2$ and let $k\in\{\lceil d/2\rceil-1,\ldots,d-1\}$. For every polytope $Q_M\supset B_d$ with at most $M$ $k$-faces, the following inequalities hold:

\begin{itemize}
\item[(i)] If $0<|q|\leq d$, then
\[
\widetilde\Delta_q(B_d,Q_M)
\geq\frac{3(1-e^{-1})}{4}\widetilde{V}_{|q|}(B_d)
\min\left\{1,\frac{|q|}{6}
\left(\frac{\omega_d}{4\kappa_{d-1}}\right)^{\frac{2}{d-1}}
M^{-\frac{2}{d-1}}
\right\}.
\]

\item[(ii)] If $|q|>d$, then
\[
\widetilde\Delta_q(B_d,Q_M)
\geq\frac{3(1-e^{-1})}{4}\kappa_d
\min\left\{1,\frac{|q|}{6} \left(\frac{\omega_d}{4\kappa_{d-1}}\right)^{\frac{2}{d-1}} M^{-\frac{2}{d-1}} \right\}.
\]

\item[(iii)] If $q=0$, then
\[
\widehat\Delta_0(B_d,Q_M)
\geq\frac{1}{8}\left(\frac{\omega_d}{4\kappa_{d-1}}\right)^{\frac{2}{d-1}} M^{-\frac{2}{d-1}}.
\]
\end{itemize}
\end{theorem}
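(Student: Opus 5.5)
The plan is to reduce to the inscribed statement, Theorem \ref{q-dual-volume-kfaces}, by polarity. Let $Q_M\supset B_d$ have at most $M$ $k$-faces with $\lceil d/2\rceil-1\le k\le d-1$. Then $P:=Q_M^\circ\subset B_d$, $o\in\operatorname{int}(P)$, and by the face-lattice anti-isomorphism $P$ has at most $M$ faces of dimension $j=d-1-k\in\{0,\dots,\lfloor d/2\rfloor\}$. So Theorem \ref{q-dual-volume-kfaces} applies to $P$ with the same $M$. The remaining work is to compare the deviation of $Q_M$ from $B_d$ with that of $P$. We have $\rho_{Q_M}=1/h_P\ge 1$ and $\rho_P\le 1$. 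The key is therefore to control the support function $h_P$ from above in terms of the radial function $\rho_P$, at least on the set of directions used in the inscribed proof.

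The first step is to reprove the core of the inscribed argument directly for $h_P$ rather than quote the final bound. By Hinman's inequality \eqref{vertices-bd}, $P$ has at most $M$ vertices. The covering argument behind Theorem \ref{q-dual-volume-kfaces} then gives a set $A\subset\Sp$ with $\sigma(A)\ge c_0$ on which $h_P(u)\le 1-\eta$. Note that this involves $h_P$, which is a stronger conclusion than a bound on $\rho_P$. Here $\eta\asymp (\omega_d/(4\kappa_{d-1}M))^{2/(d-1)}$: if $u$ is far (in cap sense) from every vertex $v$ of $P$, then $\langle v,u\rangle\le 1-\eta$ for every vertex, hence $h_P(u)\le 1-\eta$. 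Indeed, the cap-measure estimate $\sigma(\text{cap of height }\eta)\le \frac{\kappa_{d-1}}{\omega_d}(2\eta)^{(d-1)/2}$ shows that $M$ caps of height $\eta$ leave uncovered a set of measure at least a constant.

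Since $\rho_{Q_M}(u)=1/h_P(u)\ge (1-\eta)^{-1}$ on $A$, we get the following for each range of $q$.
\begin{itemize}
\item[(a)] For $q>0$: $\rho_{Q_M}^q-1\ge (1-\eta)^{-q}-1\ge 1-(1-\eta)^{q}$.
\item[(b)] For $q<0$: $1-\rho_{Q_M}^q\ge 1-(1-\eta)^{|q|}$.
\item[(c)] For $q=0$: $\ln\rho_{Q_M}\ge -\ln(1-\eta)\ge \eta$.
\end{itemize}
Then $1-(1-\eta)^{|q|}\ge 1-e^{-|q|\eta}\ge (1-e^{-1})\min\{1,|q|\eta\}$. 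Integrating over $A$ and multiplying by $c_1(d,q)$ yields bounds of the form $c\,c_1(d,q)\min\{1,|q|\eta'\}$. Tracking constants should produce the stated $\frac34$ and $\frac{|q|}{6}$, and $\frac18$ in case (iii). These presumably come from choosing the cap height so that the uncovered fraction is $3/4$ rather than $1/2$, which costs a factor $3$ in $\eta$ relative to the inscribed case. I would choose $\eta$ so that $M$ caps cover at most $1/4$ of the sphere, i.e. $M\frac{\kappa_{d-1}}{\omega_d}(2\eta)^{(d-1)/2}\le 1/4$, and then check that the resulting power matches $\frac{1}{6}(\omega_d/(4\kappa_{d-1}))^{2/(d-1)}M^{-2/(d-1)}$ after the elementary estimates.

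The main obstacle is making the cap-measure estimate and the constant bookkeeping exactly reproduce $\frac{3}{4}$, $\frac{1}{6}$ and $\frac{1}{8}$. If the precise cap bound used in the inscribed proof differs, I would first try to reuse verbatim the lemma from the proof of Theorem \ref{q-dual-volume-kfaces}, noting that it actually bounds $\max_v\langle v,u\rangle=h_P(u)$. I would then only adjust the measure threshold from $1/2$ to $3/4$. Case (iii) follows similarly, integrating $\eta$ over $A$; the factor $\frac18$ arises from the $3/4$ measure combined with the smaller $\eta$.
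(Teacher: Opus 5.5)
Your polarity route is sound, and it genuinely differs from the paper's. The paper never passes to $Q_M^\circ$. For $k=d-1$ it works with $\mathcal U_{Q_M}=\{\rho_{Q_M}>1+\widetilde\eta_{d,M}\}$, where $\widetilde\eta_{d,M}=\frac23\eta_{d,M}$, and it imports $\sigma(\mathcal U_{Q_M})\ge\frac34$ from \cite{HSW-2026}. It then uses $\ln(1+t)\ge t/2$ to get $|(1+t)^q-1|\ge 1-e^{-|q|t/2}$. The stated constants arise as $\frac{|q|}{2}\widetilde\eta_{d,M}$ and $\frac34\cdot\frac12\widetilde\eta_{d,M}$, not from a cap-height trade-off as you guessed. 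The $k$-face case then follows from \eqref{facets-bd} applied to $Q_M$ directly.

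You instead use $\rho_{Q_M}=1/h_P$, together with the observation that the vertex-cap covering from the inscribed proof actually controls $h_P(u)=\max_i\langle u,v_i\rangle$. This makes your argument self-contained. Because you get $\rho_{Q_M}\ge(1-\eta)^{-1}$ rather than $\rho_{Q_M}>1+\widetilde\eta_{d,M}$, you also avoid the factor lost in $\ln(1+t)\ge t/2$. For $d\ge 3$ your cap bound $\sigma(C)\le\frac{\kappa_{d-1}}{\omega_d}(2\eta)^{(d-1)/2}$ is true. There is equality for $d=3$, and in general the derivatives of both sides in the angular radius $\theta$ have ratio $\cos^{d-3}(\theta/2)\le 1$. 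So $\eta=\eta_{d,M}$ already leaves measure at least $\frac34$ uncovered. You get $\min\{1,|q|\eta_{d,M}\}$ in place of the stated $\min\{1,\frac{|q|}{3}\eta_{d,M}\}$, and $\frac34\eta_{d,M}$ in place of $\frac14\eta_{d,M}$ when $q=0$. That is strictly stronger than the theorem.

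The one actual error is at $d=2$, where your cap bound is false. On $\mathbb S^1$ a cap of height $\eta$ is an arc of angular radius $\theta$ with $1-\cos\theta=\eta$, and its normalized length is $\theta/\pi$. Your bound gives only $\frac{\kappa_1}{\omega_2}\sqrt{2\eta}=\frac{2\sin(\theta/2)}{\pi}<\frac{\theta}{\pi}$. With $\eta=\eta_{2,M}$ one has $\theta\ge\pi/(4M)$, so the $M$ arcs can cover more than a quarter of the circle. Falling back on the paper's cap bound, which carries an extra factor $2$, does not rescue the stated constants in this dimension. It forces $\eta=\frac14\eta_{2,M}$ and yields only $\frac{|q|}{4}\eta_{2,M}$ and $\frac{3}{16}\eta_{2,M}$.

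The fix is a direct computation on the circle. Take $\eta=\frac13\eta_{2,M}=\frac{\pi^2}{96M^2}$. From $1-\cos\theta=2\sin^2(\theta/2)\ge 2\theta^2/\pi^2$ you get $\theta/\pi\le\sqrt{\eta/2}$. Hence the $M$ arcs have total normalized length at most $\pi/\sqrt{192}<\frac14$, and this choice reproduces exactly the constants $\frac{|q|}{6}$ and $\frac18$. With that patch, and with explicit choices of $\eta$ replacing your ``should produce'', the proof is complete.
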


\section{Proofs of main results}\label{proofs-sec}

We will need the following elementary inequality.

\begin{lemma}\label{lem:ineq}
    For all $t\in[0,1]$ and all $a>0$, we have
    \begin{equation}\label{eq:ineq-tbd}
        1-(1-t)^a\geq (1-e^{-1})\min\{1,at\}.
    \end{equation} 
\end{lemma}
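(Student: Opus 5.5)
We split into the two cases $at\geq 1$ and $at<1$, and in both use the basic estimate $1-t\leq e^{-t}$, valid for all real $t$ (convexity of the exponential). Raising it to the power $a>0$ is legitimate because both sides are nonnegative for $t\in[0,1]$. It gives
\[
(1-t)^a\leq e^{-at},\qquad\text{hence}\qquad 1-(1-t)^a\geq 1-e^{-at}.
\]
So it suffices to prove the inequality $1-e^{-s}\geq (1-e^{-1})\min\{1,s\}$ for $s=at\geq 0$.

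\emph{Case $s\geq 1$.} The function $s\mapsto 1-e^{-s}$ is increasing, so $1-e^{-s}\geq 1-e^{-1}=(1-e^{-1})\min\{1,s\}$.

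\emph{Case $0\leq s\leq 1$.} The function $g(s)=1-e^{-s}$ is concave, with $g(0)=0$ and $g(1)=1-e^{-1}$. Hence on $[0,1]$ it lies above its chord:
\[
g(s)\geq (1-s)g(0)+s\,g(1)=(1-e^{-1})s=(1-e^{-1})\min\{1,s\}.
\]

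Combining the two cases with the first reduction proves \eqref{eq:ineq-tbd}. The only point needing care is the reduction step. It requires $1-t\geq 0$, so that taking powers preserves the inequality, and this is exactly why the lemma assumes $t\in[0,1]$. The endpoint $t=1$, where $(1-t)^a=0$, is also covered. Beyond this, no real obstacle is expected: the argument is just chord concavity of $1-e^{-s}$ together with its monotonicity.
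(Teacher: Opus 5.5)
Your proof is correct and follows the paper's argument essentially step for step. The paper also first reduces to $(1-t)^a\leq e^{-at}$, obtaining it from $\ln(1-t)\leq -t$ (proved by differentiating $\ln(1-t)+t$) rather than from your convexity bound $1-t\leq e^{-t}$, and then proves $1-e^{-x}\geq (1-e^{-1})\min\{1,x\}$ with the same two cases: the chord under the concave function on $[0,1]$, and monotonicity for $x\geq 1$.
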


\begin{proof}
    For $t\in[0,1)$, define the function $f(t)=\ln(1-t)+t$. Its derivative satisfies
    \[
f'(t)=-\frac{1}{1-t}+1=-\frac{t}{1-t}\leq 0,
    \]
    so $f$ is decreasing. Since $f(0)=0$, this implies $f(t)\leq 0$, i.e., $\ln(1-t)+t\leq 0$, or, equivalently, $\ln(1-t)\leq -t$ for all $t\in[0,1)$. At $t=1$, we have $\ln(1-t)=\ln 0=-\infty$, so the inequality still holds in the extended sense. Hence, the inequality $\ln(1-t)\leq -t$ holds for all $t\in[0,1]$. Multiplying both sides by $a>0$, we get $a\ln(1-t)\leq -at$, and hence $(1-t)^a\leq e^{-at}$ for all $t\in[0,1]$ and $a>0$. Therefore,
    \begin{equation}\label{eq:step1}
      \forall t\in[0,1],\quad\forall a>0,\qquad  1-(1-t)^a\geq 1-e^{-at}.
    \end{equation}

    Now set $x=at\geq 0$. We will establish the following inequality: 
    \begin{equation}\label{eq:lem-main-step}
        \forall x\geq 0,\qquad 1-e^{-x} \geq (1-e^{-1})\min\{1,x\}.
    \end{equation}
    For the interval $0\leq x\leq 1$, the function $g(x)=1-e^{-x}$ is concave. Therefore, its graph lies above the chord joining $(0,g(0))$ and $(1,g(1))$. Thus, in this case we get that for every $x\in[0,1]$,
    \begin{align*}
1-e^{-x} = g(x)&=g(x\cdot 1+(1-x)\cdot 0)\\
&\geq xg(1)+(1-x)g(0)=(1-e^{-1})x\geq (1-e^{-1})\min\{1,x\}. 
    \end{align*}
    Now assume that $x\geq 1$. Since $g(x)$ is an increasing function, for every $x\geq 1$ we have
    \[
g(x)=1-e^{-x} \geq 1-e^{-1}=(1-e^{-1})\cdot 1\geq (1-e^{-1})\min\{1,x\}.
    \]
    Combining the two cases, we have proved  \eqref{eq:lem-main-step}. Now substituting $x=at\geq 0$ in \eqref{eq:lem-main-step}, we get
    \[
\forall t\in[0,1],\quad\forall a>0,\qquad 1-e^{-at} \geq (1-e^{-1})\min\{1,at\}.
    \]
    Combining this with inequality \eqref{eq:step1}, we obtain the desired inequality \eqref{eq:ineq-tbd}.
\end{proof}

\subsection{Proof of Theorem \ref{q-dual-volume-kfaces}}

In the first step, we begin with the case $k=0$. In the second step, we extend the result to the desired range of $k$.

Assume first that $k=0$, so that $f_0(P_M)\leq M$. Since $o\in\operatorname{int}(P_M)$, the polytope $P_M$ is full-dimensional, and hence $M\geq d+1$. Denote the vertices of $P_M$ by $v_1,\ldots,v_m$, where $m\leq M$, and set
\[
\eta_{d,M}:=\frac{1}{2}\left(\frac{\omega_d}{4\kappa_{d-1}M}\right)^{\frac{2}{d-1}}.
\]
By Stirling's inequality and the fact that $M\geq d+1$, we have $\eta_{d,M}\leq 1/2$. For $u\in\Sp$, define
\[
\mathcal{V}_{P_M}:=\{u\in\Sp:\,\rho_{P_M}(u)\leq 1-\eta_{d,M}\}.
\]
We claim that
\[
\vol_{d-1}(\mathcal{V}_{P_M})\geq \frac{1}{2}\omega_d.
\]
Indeed, if $u\not\in\mathcal{V}_{P_M}$, then $\rho_{P_M}(u)>1-\eta_{d,M}$. 
Since $\rho_{P_M}(u)u\in P_M$, it can be expressed as a convex combination of the vertices of $P_M$. Hence, there exist coefficients $\lambda_1,\ldots,\lambda_m\geq 0$ with $\sum_{i=1}^m\lambda_i=1$ such that
\[
\rho_{P_M}(u)u=\sum_{i=1}^m\lambda_i v_i.
\]
Taking the inner product with $u$, we obtain
\[
\rho_{P_M}(u)=\sum_{i=1}^m\lambda_i\langle u,v_i\rangle.
\]
Since $\rho_{P_M}(u)>1-\eta_{d,M}$, there must exist an index $i$ such that
\[
\langle u,v_i\rangle>1-\eta_{d,M}.
\]
In particular, $v_i\neq o$. Since $\|v_i\|_2\leq 1$, it follows that
\[
\left\langle u,\frac{v_i}{\|v_i\|_2}\right\rangle>1-\eta_{d,M}.
\]
Thus, $u$ belongs to the spherical cap
\[
C_i:=\left\{u\in\Sp:
\left\langle u,\frac{v_i}{\|v_i\|_2}\right\rangle>1-\eta_{d,M}
\right\}.
\]
Consequently,
\[
\Sp\setminus\mathcal{V}_{P_M}
\subset\bigcup_{i=1}^m C_i.
\]
Using the standard estimate for a spherical cap of height $\eta_{d,M}$, namely,
\[
\vol_{d-1}(C_i)\leq 2\kappa_{d-1}(2\eta_{d,M})^{\frac{d-1}{2}},
\]
we derive that
\[
\vol_{d-1}(\Sp\setminus\mathcal{V}_{P_M})
\leq 2M\kappa_{d-1}(2\eta_{d,M})^{\frac{d-1}{2}}
=\frac{1}{2}\omega_d.
\]
Therefore,
\[
\vol_{d-1}(\mathcal{V}_{P_M})=\vol_{d-1}(\Sp)-\vol_{d-1}(\Sp\setminus\mathcal{V}_{P_M})\geq \frac{1}{2}\omega_d.
\]
Equivalently, letting $\sigma$ denote the normalized spherical measure on $\Sp$, we have $\sigma(\mathcal{V}_{P_M})\geq \frac{1}{2}$. 

We first consider the case $q\neq 0$. If $q>0$, then for every $u\in\mathcal{V}_{P_M}$,
\[
1-\rho_{P_M}(u)^q
\geq 1-(1-\eta_{d,M})^q.
\]
If $q<0$, then writing $|q|=-q$, for every $u\in\mathcal{V}_{P_M}$ we have
\[
\rho_{P_M}(u)^q-1
=\rho_{P_M}(u)^{-|q|}-1
\geq(1-\eta_{d,M})^{-|q|}-1
\geq 1-(1-\eta_{d,M})^{|q|}.
\]
Thus, in either case, we obtain the lower estimate
\[
\forall u\in\mathcal{V}_{P_M},\quad |1-\rho_{P_M}(u)^q|
\geq 1-(1-\eta_{d,M})^{|q|}.
\]
Next, recall the estimate
\[
\forall t\in[0,1],\quad\forall a>0,\qquad
1-(1-t)^a\geq (1-e^{-1})\min\{1,at\},
\]
from Lemma \ref{lem:ineq}. Applying this with $a=|q|$ and $t=\eta_{d,M}$, we obtain
\[
\forall u\in\mathcal{V}_{P_M},\quad |1-\rho_{P_M}(u)^q|
\geq (1-e^{-1})\min\{1,|q|\eta_{d,M}\}.
\]
This implies
\begin{align*}
\int_{\Sp}|1-\rho_{P_M}(u)^q|\,d\sigma(u)
&\geq\int_{\mathcal{V}_{P_M}}|1-\rho_{P_M}(u)^q|\,d\sigma(u)\\
&\geq (1-e^{-1})\sigma(\mathcal{V}_{P_M})\min\{1,|q|\eta_{d,M}\}\\
&\geq\frac{1-e^{-1}}{2}
\min\left\{1, \frac{|q|}{2}\left(\frac{\omega_d}{4\kappa_{d-1}M}\right)^{\frac{2}{d-1}}
\right\}.
\end{align*}
Now suppose that $0<|q|\leq d$. By the definition of the analytic extension of the dual volume deviation,
\[
\widetilde\Delta_q(B_d,P_M)
=\widetilde{V}_{|q|}(B_d)\int_{\Sp}|1-\rho_{P_M}(u)^q|\,d\sigma(u).
\]
Hence,
\[
\widetilde\Delta_q(B_d,P_M)
\geq
\frac{1-e^{-1}}{2}\widetilde{V}_{|q|}(B_d)
\min\left\{1,
\frac{|q|}{2}\left(\frac{\omega_d}{4\kappa_{d-1}}\right)^{\frac{2}{d-1}}
M^{-\frac{2}{d-1}}
\right\}.
\]
This proves the assertion for $0<|q|\leq d$ in the vertex case.

Next, suppose that $|q|>d$. By the definition of the dual volume deviation in this range, we have
\[
\widetilde\Delta_q(B_d,P_M)
=\kappa_d\int_{\Sp}|1-\rho_{P_M}(u)^q|\,d\sigma(u).
\]
Therefore,
\[
\widetilde\Delta_q(B_d,P_M)
\geq
\frac{1-e^{-1}}{2}\kappa_d
\min\left\{1,
\frac{|q|}{2}\left(\frac{\omega_d}{4\kappa_{d-1}}\right)^{\frac{2}{d-1}}
M^{-\frac{2}{d-1}}
\right\}.
\]
This proves the assertion for $|q|>d$ in the vertex case.

It remains to prove the logarithmic case $q=0$. By definition,
\[
\widehat\Delta_0(B_d,P_M)
=\int_{\Sp}|\ln \rho_{P_M}(u)|\,d\sigma(u),
\]
since $\rho_{B_d}(u)=1$ for every $u\in\Sp$. For every $u\in\mathcal{V}_{P_M}$, we have
\[
|\ln \rho_{P_M}(u)|= -\ln \rho_{P_M}(u)
\geq -\ln(1-\eta_{d,M})
\geq\eta_{d,M}.
\]
Thus,
\[
\widehat\Delta_0(B_d,P_M)
\geq\sigma(\mathcal{V}_{P_M})\eta_{d,M}
\geq\frac{1}{2}\eta_{d,M}
=\frac{1}{4}\left(\frac{\omega_d}{4\kappa_{d-1}M}\right)^{\frac{2}{d-1}}.
\]
This yields the desired lower bound in the case $q=0$.

Finally, let $P_M\subset B_d$ have at most $M$ $k$-faces, where $k\in\{0,1,\ldots,\lfloor d/2\rfloor\}$. By \eqref{vertices-bd}, we have $f_0(P_M)\leq f_k(P_M)\leq M$. 
Thus, $P_M$ has at most $M$ vertices. Applying the vertex case proved above gives the desired lower estimate. This completes the proof. \qed

%%%%%%%%%%%%%%%%%%%%%%%%%%%%%%%%%%%
\subsection{Proof of Theorem \ref{q-dual-volume-circ-kfaces}}

We first prove the theorem in the facets case $k=d-1$. Let $Q_M\in\mathcal{P}_M^{\rm out}(B_d)$. 
Set $\widetilde{\eta}_{d,M}:=\frac{2}{3}\eta_{d,M}$, and define
\[
\mathcal U_{Q_M}
:=\left\{u\in\Sp:\, \rho_{Q_M}(u)>1+\widetilde{\eta}_{d,M}\right\}.
\]
As shown in \cite{HSW-2026},
\[
\sigma(\mathcal{U}_{Q_M})\geq \frac{3}{4}.
\]

First, suppose that $q\neq 0$. Since $Q_M\supset B_d$, we have $\rho_{Q_M}(u)\geq 1$ for every $u\in\Sp$. 
Hence
\[
\begin{aligned}
\widetilde\Delta_q(B_d,Q_M)
&=c_1(d,q)\int_{\Sp}
|\rho_{Q_M}(u)^q-1|\,d\sigma(u)\\
&\geq c_1(d,q)\int_{\mathcal U_{Q_M}}
|\rho_{Q_M}(u)^q-1|\,d\sigma(u)\\
&\geq c_1(d,q)\bigl|(1+\widetilde{\eta}_{d,M})^q-1\bigr|
\sigma(\mathcal U_{Q_M})\\
&\geq\frac{3}{4}c_1(d,q)
\bigl|(1+\widetilde{\eta}_{d,M})^q-1\bigr|.
\end{aligned}
\]
We now estimate the last factor. Set $a=|q|>0$. We claim that for every $t\in[0,1]$,
\[
|(1+t)^q-1|\geq 1-(1+t)^{-a}.
\]
Indeed, if $q<0$, this is an equality; if $q>0$, it follows from
\[
(1+t)^q-1\geq 1-(1+t)^{-q}.
\]
Moreover, using the inequality $\ln(1+t)\geq\frac{t}{2}$, $0\leq t\leq 1$, we get
\[
(1+t)^{-a}
=e^{-a\ln(1+t)}
\leq e^{-at/2}.
\]
Consequently, $|(1+t)^q-1|
\geq 1-e^{-at/2}$. Using again the inequality $1-e^{-x}\geq (1-e^{-1})\min\{1,x\}$, $x\geq 0$, with $x=at/2$, we obtain
\[
|(1+t)^q-1|
\geq(1-e^{-1})\min\left\{1,\frac{|q|t}{2}\right\}.
\]
Applying this with $t=\widetilde\eta_{d,M}$ and using the definition of $\widetilde\eta_{d,M}$,  
we obtain
\[
\bigl|(1+\widetilde{\eta}_{d,M})^q-1\bigr|\geq(1-e^{-1})\min\left\{
1,\frac{|q|}{6}\left(\frac{\omega_d}{4\kappa_{d-1}}\right)^{\frac{2}{d-1}}
M^{-\frac{2}{d-1}}\right\}.
\]
Therefore,
\[
\widetilde\Delta_q(B_d,Q_M)
\geq\frac{3(1-e^{-1})}{4}c_1(d,q)
\min\left\{1,\frac{|q|}{6}
\left(\frac{\omega_d}{4\kappa_{d-1}}
\right)^{\frac{2}{d-1}}
M^{-\frac{2}{d-1}}\right\}.
\]
This proves parts (i) and (ii) in the case $k=d-1$.

It remains to consider the logarithmic case $q=0$. Since $Q_M\supset B_d$,
\[
\widehat\Delta_0(B_d,Q_M)
=\int_{\Sp}\ln\rho_{Q_M}(u)\,d\sigma(u).
\]
Therefore
\[
\begin{aligned}
\widehat\Delta_0(B_d,Q_M)
&\geq\int_{\mathcal{U}_{Q_M}}
\ln\rho_{Q_M}(u)\,d\sigma(u)
\geq\ln(1+\widetilde{\eta}_{d,M})\sigma(\mathcal{U}_{Q_M})
\geq\frac{3}{4}\ln(1+\widetilde{\eta}_{d,M}).
\end{aligned}
\]
Since $0\leq\widetilde{\eta}_{d,M}\leq 1$, we have
\[
\ln(1+\widetilde{\eta}_{d,M})
\geq\frac{1}{2}\widetilde{\eta}_{d,M}.
\]
Hence
\[
\widehat\Delta_0(B_d,Q_M)
\geq\frac{3}{8}\widetilde\eta_{d,M}.
\]
Now using the definition of $\widetilde{\eta}_{d,M}$, we obtain
\[
\widehat\Delta_0(B_d,Q_M)
\geq\frac{1}{8}\left(\frac{\omega_d}{4\kappa_{d-1}}\right)^{\frac{2}{d-1}}
M^{-\frac{2}{d-1}}.
\]
This proves part (iii) in the case $k=d-1$.

Finally, suppose that $k\in
\{\left\lceil\frac{d}{2}\right\rceil-1,\ldots,d-1\}$
and that $Q_M\supset B_d$ has at most $M$ $k$-faces. By \eqref{facets-bd}, we have $f_{d-1}(Q_M)\leq f_k(Q_M)\leq M$, so $Q_M$ has at most $M$ facets. Applying the facets case proved above gives the desired estimates. This completes the proof. \qed

%%%%%%%%%%%%%%%%%%%%%%%%%%%%%%%%%%%

\subsection{Proof of Theorem \ref{inscribed-vol-kfaces}}

Choosing $q=d$ in Theorem \ref{q-dual-volume-kfaces}, we obtain the desired inequality in the case $o\in\operatorname{int}(P_M)$. It remains to consider the case when $o\not\in\operatorname{int}(P_M)$. Then $P_M$ is contained in a half-ball of $B_d$. Thus, by the separation theorem, there exists a direction  $u\in\Sp$ such that
\[
P_M\subset B_d\cap\{x\in\R^d:\,\langle x,u\rangle\leq 0\}.
\]
Therefore,
\[
    \vol_d(B_d\setminus P_M)\geq \frac{1}{2}\kappa_d.
\]
Since
\[
    \frac{1}{2}\kappa_d
    \geq
    \frac{1-e^{-1}}{2}\kappa_d
    \min\left\{
    1,\,
    \frac d2\left(\frac{\omega_d}{4\kappa_{d-1}}\right)^{\frac{2}{d-1}}\MM
    \right\},
\]
this implies the asserted bound. \qed
%%%%%%%%%%%%%%%%%%%%%%%%%%%%%%%

\subsection{Proof of Theorem \ref{circumscribed-mw-kfaces}}

Let $P_M\supset B_d$ be a polytope with at most $M$ $k$-faces, where $k\in\{\lceil d/2\rceil -1,\ldots,d-1\}$. Since $P_M\supset B_d$ is bounded, the polar polytope $P_M^\circ\subset B_d$ contains the origin in its interior. Moreover, since $o\in\operatorname{int}(P_M)$, polarity gives an inclusion-reversing bijection between the $k$-faces of $P_M$ and the $(d-1-k)$-faces of $P_M^\circ$. Hence
\[
    f_k(P_M)=f_{d-1-k}(P_M^\circ).
\]
Since $k\in\{\lceil d/2\rceil-1,\ldots,d-1\}$, we have
\[
    d-1-k\in\{0,1,\ldots,\lfloor d/2\rfloor\}.
\] 
Thus, applying Theorem \ref{inscribed-vol-kfaces} to $P_M^\circ$ and using a  formula of Glasauer and Gruber \cite{glasgrub}, we obtain
\begin{equation}
    \begin{aligned}
      w(P_M)-w(B_d) &=\frac{2}{\omega_d}\int_{B_d\setminus P_M^\circ}\|x\|_2^{-(d+1)}\,dx \geq \frac{2}{\omega_d}\cdot\vol_d(B_d\setminus P_M^\circ)  \\
      &\geq \frac{2}{\omega_d}\cdot \frac{1-e^{-1}}{2}\kappa_d
    \min\left\{1,\,
    \frac d2\left(\frac{\omega_d}{4\kappa_{d-1}M}\right)^{\frac{2}{d-1}}
    \right\}\\
    &=\frac{1-e^{-1}}{d}
\min\left\{1,\frac{d}{2}\left(\frac{\omega_d}{4\kappa_{d-1}}\right)^{\frac{2}{d-1}}\MM\right\}.
    %=\frac{1-e^{-1}}{d}\min\left\{1,\frac{d}{2}\left(\frac{\omega_d}{4\kappa_{d-1}M}\right)^{\frac{2}{d-1}}\right\}.
    \end{aligned}
\end{equation}
This completes the proof. \qed
\section{Quantitative upper bounds in the large-$N$ regime}\label{sec:upper-bds}

Our next result is an upper bound for the volume difference of the Euclidean ball and an inscribed polytope with at most $N$ vertices in the large-$N$ regime.

\begin{lemma}\label{lem:explicit-inscribed-upper}
There exist absolute constants $C_0,C_1,C_2>0$ such that the following statement holds true. Let $d\geq 2$, and let $N\geq (C_0 d)^{\frac{d-1}{2}}$. Then there exists an inscribed polytope $R_N\in\mathcal{P}_N^{\rm in}(B_d)$ such that
\[
(1-C_1\NN)B_d\subset R_N\subset B_d
\qquad\text{and}\qquad
\vol_d(B_d\setminus R_N)
\leq C_2 d\kappa_d\NN.
\]
\end{lemma}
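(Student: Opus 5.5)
Our plan is to construct $R_N$ as the convex hull of a maximal $\delta$-separated set of points on $\Sp$ (an $\varepsilon$-net). We bound its cardinality by a volume-packing argument, and we control its inradius by a covering argument. The volume estimate then follows from the inclusion $(1-C_1\NN)B_d\subset R_N$, since
\[
\vol_d(B_d\setminus R_N)\leq \kappa_d\bigl(1-(1-C_1\NN)^d\bigr)\leq C_1 d\kappa_d\NN .
\]
So we may take $C_2=C_1$. The real content is the inradius bound.

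\textbf{Step 1 (net).} Fix an angular radius $\phi\in(0,\pi/2)$, to be chosen. Let $\{x_1,\ldots,x_m\}\subset\Sp$ be a maximal set with pairwise geodesic distances $>\phi$. By maximality, the closed geodesic caps of radius $\phi$ centred at the $x_i$ cover $\Sp$. The caps of radius $\phi/2$ are pairwise disjoint, so
\[
m\leq \frac{\omega_d}{\vol_{d-1}(\text{cap of angular radius }\phi/2)}.
\]
We use the standard lower bound
\[
\vol_{d-1}(\text{cap of angular radius }\alpha)\geq \kappa_{d-1}(\sin\alpha)^{d-1}\geq \kappa_{d-1}(2\alpha/\pi)^{d-1}.
\]
This holds because the cap contains the graph over a $(d-1)$-ball of radius $\sin\alpha$. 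It gives
\[
m\leq \frac{\omega_d}{\kappa_{d-1}}\Bigl(\frac{\pi}{\phi}\Bigr)^{d-1}.
\]
Since $\omega_d/\kappa_{d-1}\leq C\sqrt d$, we have $(\omega_d/\kappa_{d-1})^{1/(d-1)}\leq C'$ for an absolute constant $C'$. Choosing
\[
\phi:=\pi C' N^{-\frac1{d-1}}
\]
therefore ensures $m\leq N$. If $N^{1/(d-1)}\geq\sqrt{C_0 d}$ with $C_0$ large, then $\phi\leq \pi/4$, which keeps us in the small-angle regime. In fact the threshold is only needed to make $\phi$ small and $C_1\NN<1$.

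\textbf{Step 2 (inradius).} Set $R_N=\operatorname{conv}\{x_1,\ldots,x_m\}\subset B_d$. We claim that $h_{R_N}(u)\geq\cos\phi$ for every $u\in\Sp$. Indeed, some $x_i$ lies within geodesic distance $\phi$ of $u$, so $\langle u,x_i\rangle\geq\cos\phi$. A convex body whose support function is at least $r$ in every direction contains $rB_d$, by the separation theorem: if a point of $rB_d$ lay outside $R_N$, a separating direction $u$ would give $h_{R_N}(u)<r$. Hence $R_N\supset(\cos\phi)B_d$. Since
\[
1-\cos\phi\leq\phi^2/2=\tfrac{\pi^2C'^2}{2}\NN,
\]
the claim holds with $C_1=\pi^2C'^2/2$. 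We take $C_0$ large enough that $C_1\NN\leq 1/2$.

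\textbf{Step 3 (volume).} Using $(1-s)^d\geq 1-ds$, we obtain the displayed volume bound. The main point requiring care is Step 1: we must verify that the dimensional factor $\omega_d/\kappa_{d-1}\asymp\sqrt d$ and the factor $\pi/\phi$ combine so that $m\leq N$ with $\phi\asymp N^{-1/(d-1)}$ and absolute constants. This holds because the $(d-1)$-th root of $\sqrt{d}$ is bounded, and the cap lower bound $\sin\alpha\geq 2\alpha/\pi$ loses only an absolute factor per dimension, which the $(d-1)$-th root absorbs. No delicate Stirling estimates are needed beyond $\omega_d/\kappa_{d-1}\leq C\sqrt d$.
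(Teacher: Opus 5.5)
Your proposal is correct and follows essentially the same route as the paper. Both take a maximal separated set on the sphere, bound its cardinality by packing disjoint half-radius caps (each of area at least $\kappa_{d-1}\sin^{d-1}$ of its radius), deduce $(\cos\phi)B_d\subset R_N$ from the support-function bound $h_{R_N}(u)\geq\cos\phi$, and then apply $\cos\phi\geq 1-\phi^2/2$ and Bernoulli's inequality. The differences are only in bookkeeping: you use $\sin\alpha\geq 2\alpha/\pi$ and absorb $(\omega_d/\kappa_{d-1})^{1/(d-1)}$ into an absolute constant $C'$, whereas the paper uses $\sin(r/2)\geq r/4$ and the explicit radius $r=4\bigl(\omega_d/(\kappa_{d-1}N)\bigr)^{1/(d-1)}$.
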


\begin{proof}
Let $r\in(0,1]$. By a standard maximal net argument, there exists a set of points $v_1,\ldots,v_m\in\mathbb{S}^{d-1}$ such that the spherical caps of geodesic radius $r$ centered at the $v_i$'s cover $\mathbb{S}^{d-1}$, and $m\leq\frac{\omega_d}{\kappa_{d-1}(r/4)^{d-1}}$. Indeed, select a maximal $r$-separated set. Then the caps of radius $r$ cover $\mathbb{S}^{d-1}$, while the caps of radius $r/2$ are pairwise disjoint. The orthogonal projection of a cap of geodesic radius $r/2$ onto the tangent hyperplane at its center contains a Euclidean $(d-1)$-ball of radius $\sin(r/2)$. Also, for $0<r\leq 1$ we have $\sin(r/2)\geq r/4$. Hence, each such cap has a $(d-1)$-dimensional volume of at least $\kappa_{d-1}(r/4)^{d-1}$, which implies the desired bound for $m$.

Now choose $r:=4\left(\frac{\omega_d}{\kappa_{d-1}N}\right)^{\frac{1}{d-1}}$. For $N\geq (C_0 d)^{(d-1)/2}$ with $C_0>0$ sufficiently large, we have $r\leq 1$, and the preceding estimate implies $m\leq N$. Let $R_N:=\operatorname{conv}\{v_1,\ldots,v_m\}$. Then $R_N\in\mathcal{P}_N^{\rm in}(B_d)$. Since the $v_i$'s form an $r$-net on the sphere, for every $u\in\mathbb{S}^{d-1}$ there exists an index $i$ such that the spherical distance between $u$ and $v_i$ is at most $r$. Therefore,
\[
h_{R_N}(u)\geq \langle u,v_i\rangle\geq\cos r.
\]
Thus, $(\cos r)B_d\subset R_N$. Using the inequality $\cos r\geq 1-r^2/2$, we obtain $(1-r^2/2)B_d\subset R_N$. 
Since $r^2=16\left(\frac{\omega_d}{\kappa_{d-1}N}\right)^{\frac{2}{d-1}}$ 
and, by Stirling's inequality,  $(\frac{\omega_d}{\kappa_{d-1}})^{\frac{2}{d-1}}$
is bounded above by an absolute constant for all $d\geq 2$, we derive that $(1-C_1\NN)B_d\subset R_N$ for some absolute constant $C_1>0$.

Finally,
\[
\vol_d(B_d\setminus R_N)
\leq\vol_d\bigl(B_d\setminus (1-r^2/2)B_d\bigr)
=\kappa_d\left(1-\left(1-\frac{r^2}{2}\right)^d\right).
\]
Since $r\leq 1$, we have $1-(1-r^2/2)^d\leq \frac{d}{2}r^2$. Hence
\[
\vol_d(B_d\setminus R_N)
\leq
\frac{d}{2}\kappa_d r^2
=8d\kappa_d\left(\frac{\omega_d}{\kappa_{d-1}N}\right)^{\frac{2}{d-1}}.
\]
Again using that $(\frac{\omega_d}{\kappa_{d-1}})^{\frac{2}{d-1}}$ 
is bounded above by an absolute constant, we derive that
\[
\vol_d(B_d\setminus R_N)
\leq C_2 d\kappa_d\NN,
\]
where $C_2>0$ is an absolute constant.
\end{proof}

\begin{remark}
In the vertices case, Theorem \ref{inscribed-vol-kfaces} and Lemma \ref{lem:explicit-inscribed-upper} are sharp up to absolute constants in the large-$N$ regime. Indeed, if $N\geq (C_0d)^{\frac{d-1}{2}}$ where $C_0>0$ is sufficiently large, then the minimum in Theorem \ref{inscribed-vol-kfaces} is attained by its second term. Hence, in the large-$N$ regime, the theorem gives $\vol_d(B_d\setminus P_N)\geq cd\kappa_d\NN$ 
for every inscribed polytope $P_N\in\mathcal{P}_N^{\rm in}(B_d)$. On the other hand, by Lemma \ref{lem:explicit-inscribed-upper} there exists an inscribed polytope $R_N\in\mathcal{P}_N^{\rm in}(B_d)$ satisfying $\vol_d(B_d\setminus R_N)
\leq Cd\kappa_d\NN$. Thus, in the inscribed vertices case $k=0$, the upper and lower bounds have the same order $d\kappa_d\NN$, 
up to absolute constants. In this sense, Theorem \ref{inscribed-vol-kfaces} and Lemma \ref{lem:explicit-inscribed-upper} are essentially sharp in the large-$N$ regime.
\end{remark}

We also obtain the following quantitative upper bound on the mean width approximation of the ball by circumscribed polytopes with at most $N$ facets.

\begin{corollary}\label{cor:explicit-circ-mw-upper}
There exist absolute constants $C_0,C>0$ such that, for all $N\geq (C_0d)^{\frac{d-1}{2}}$, there exists a circumscribed polytope $Q_N\in\mathcal{P}_N^{\rm out}(B_d)$ such that
\[
w(Q_N)-w(B_d)\leq C\NN.
\]
\end{corollary}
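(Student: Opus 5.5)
The plan is to obtain $Q_N$ by polarity from the inscribed polytope $R_N$ of Lemma \ref{lem:explicit-inscribed-upper}, and to control the mean width excess by the Glasauer--Gruber formula already used in the proof of Theorem \ref{circumscribed-mw-kfaces}. Take $C_0$ as in Lemma \ref{lem:explicit-inscribed-upper}, possibly enlarged, and let $N\geq (C_0d)^{\frac{d-1}{2}}$. Choose $R_N\in\mathcal{P}_N^{\rm in}(B_d)$ with
\[
(1-C_1\NN)B_d\subset R_N\subset B_d .
\]
Enlarging $C_0$ if necessary, we may assume $\epsilon:=C_1\NN\leq 1/2$. In particular $o\in\operatorname{int}(R_N)$.

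Set $Q_N:=R_N^\circ$. Since $R_N\subset B_d$, we have $Q_N\supset B_d$. Since $o\in\operatorname{int}(R_N)$, polarity exchanges vertices and facets, so $f_{d-1}(Q_N)=f_0(R_N)\leq N$, and hence $Q_N\in\mathcal{P}_N^{\rm out}(B_d)$. Moreover $Q_N^\circ=R_N$. Applying the Glasauer--Gruber formula to $Q_N$ gives
\[
w(Q_N)-w(B_d)=\frac{2}{\omega_d}\int_{B_d\setminus R_N}\|x\|_2^{-(d+1)}\,dx .
\]

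On $B_d\setminus R_N$ we have $\|x\|_2\geq 1-\epsilon\geq 1/2$. The main obstacle is that the naive bound $\|x\|_2^{-(d+1)}\leq 2^{d+1}$ is exponentially bad. Instead, integrate over the shell $B_d\setminus(1-\epsilon)B_d$ in polar coordinates:
\[
\int_{B_d\setminus R_N}\|x\|_2^{-(d+1)}\,dx\leq \omega_d\int_{1-\epsilon}^1 s^{-2}\,ds=\omega_d\Bigl(\frac{1}{1-\epsilon}-1\Bigr)=\omega_d\frac{\epsilon}{1-\epsilon}\leq 2\omega_d\epsilon .
\]
Here the power $s^{d-1}$ from the polar coordinate Jacobian cancels most of the singular weight, which is exactly what avoids the $2^{d+1}$ loss.

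Therefore
\[
w(Q_N)-w(B_d)\leq \frac{2}{\omega_d}\cdot 2\omega_d\epsilon=4C_1\NN,
\]
and the corollary holds with $C=4C_1$. Alternatively, one can use $h_{Q_N}=1/\rho_{R_N}\leq (1-\epsilon)^{-1}$ directly to get $w(Q_N)-w(B_d)\leq 2\epsilon/(1-\epsilon)$, which gives the same bound.
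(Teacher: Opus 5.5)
Your proof is correct, and its skeleton is the paper's: take $Q_N:=R_N^\circ$ for the inscribed polytope $R_N$ of Lemma \ref{lem:explicit-inscribed-upper}, and express $w(Q_N)-w(B_d)$ by the Glasauer--Gruber formula. The only difference is how you bound the integral $\int_{B_d\setminus R_N}\|x\|_2^{-(d+1)}\,dx$.

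The paper bounds the weight uniformly, $\|x\|_2^{-(d+1)}\leq\bigl(1-C_1N^{-\frac{2}{d-1}}\bigr)^{-(d+1)}\leq C$. This is legitimate because the hypothesis $N\geq(C_0d)^{\frac{d-1}{2}}$ gives $N^{-\frac{2}{d-1}}\leq 1/(C_0d)$, hence $C_1(d+1)N^{-\frac{2}{d-1}}\leq 1/2$ once $C_0$ is large. It then invokes the lemma's volume estimate $\vol_d(B_d\setminus R_N)\leq C_2d\kappa_dN^{-\frac{2}{d-1}}$. So the exponential loss you warn about never arises there: under the standing hypothesis, $d\epsilon=O(1)$, not merely $\epsilon\leq 1/2$.

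Your shell integration uses only the inclusion $(1-\epsilon)B_d\subset R_N$ and only $\epsilon\leq 1/2$. It needs no volume bound and gives the explicit constant $C=4C_1$. Your support-function remark ($Q_N\subset(1-\epsilon)^{-1}B_d$, so $w(Q_N)\leq 2/(1-\epsilon)$) shows that even the Glasauer--Gruber formula is dispensable. Both arguments are valid; yours is slightly more economical.
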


\begin{proof}
Let $R_N\in\mathcal{P}_N^{\rm in}(B_d)$ be the polytope from Lemma \ref{lem:explicit-inscribed-upper}, and set $Q_N:=R_N^\circ$. Then $Q_N\in\mathcal{P}_N^{\rm out}(B_d)$. By the polarity formula of Glasauer and Gruber \cite{glasgrub},
\[
w(Q_N)-w(B_d)
=\frac{2}{\omega_d}
\int_{B_d\setminus R_N}\|x\|_2^{-(d+1)}\,dx.
\]
Moreover, by Lemma \ref{lem:explicit-inscribed-upper}, we have the inclusion $(1-C_1\NN)B_d\subset R_N$. 
If $C_0>0$ is chosen to be sufficiently large, then in particular
$C_1(d+1)N^{-\frac{2}{d-1}}\leq \frac{1}{2}$. Hence, for every $x\in B_d\setminus R_N$, we get
\[
\|x\|_2^{-(d+1)}
\leq\left(1-C_1\NN\right)^{-(d+1)}
\leq C,
\]
where $C>0$ is an absolute constant. Therefore,
\[
\begin{aligned}
w(Q_N)-w(B_d)
&\leq\frac{C}{\omega_d}\cdot\vol_d(B_d\setminus R_N)
\leq\frac{C}{d\kappa_d}
\cdot d\kappa_d\NN
=C\NN.
\end{aligned}
\]
This proves the claim.
\end{proof}

\begin{remark}
In the facets case, Theorem \ref{circumscribed-mw-kfaces} and Corollary \ref{cor:explicit-circ-mw-upper} are sharp in the large-$N$ regime. Indeed, if $N\geq (C_0d)^{\frac{d-1}{2}}$ where $C_0>0$ is sufficiently large, then the minimum in Theorem \ref{circumscribed-mw-kfaces} is attained by its second term, and the theorem gives $w(Q_N)-w(B_d)\geq c\NN$
for every circumscribed polytope $Q_N\in\mathcal{P}_N^{\rm out}(B_d)$. Conversely, Corollary \ref{cor:explicit-circ-mw-upper} gives a circumscribed polytope $\widetilde{Q}_N\in\mathcal{P}_N^{\rm out}(B_d)$ such that $w(\widetilde Q_N)-w(B_d)\leq C\NN$. Therefore, in the facets case $k=d-1$, the upper and lower bounds have the same order $\NN$, up to absolute constants. In this sense, Theorem \ref{circumscribed-mw-kfaces} and Corollary \ref{cor:explicit-circ-mw-upper} are essentially sharp in the large-$N$ regime.
\end{remark}

\subsection{Extensions to convex bodies}

To conclude, we extend the upper bounds in this section to all convex bodies in $\R^d$.

\begin{theorem}\label{thm:explicit-inscribed-upper-cvx-bodies}
Let $K$ be a convex body in $\R^d$. There exist absolute constants $C_0,C_2>0$ such that the following statement holds true. Let $d\geq 2$, and let $N\geq (C_0 d)^{\frac{d-1}{2}}$. Then there exists an inscribed polytope $R_N\in\mathcal{P}_N^{\rm in}(K)$ such that
\[
\vol_d(K\setminus R_N)
\leq C_2 d\vol_d(K)\NN.
\]
\end{theorem}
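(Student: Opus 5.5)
The plan is to reduce to the ball via Macbeath's classical extremal property of ellipsoids, and then apply Lemma \ref{lem:explicit-inscribed-upper}. For a convex body $K$ and $N\geq d+1$, set
\[
\delta(K,N):=\inf_{P\in\mathcal{P}_N^{\rm in}(K)}\frac{\vol_d(K\setminus P)}{\vol_d(K)}.
\]
By the Blaschke selection theorem and continuity of volume, the infimum is attained by some $R_N\in\mathcal{P}_N^{\rm in}(K)$, since polytopes with at most $N$ vertices in $K$ form a compact family. The quantity $\delta(K,N)$ is affine invariant. The theorem then follows from two facts: Macbeath's inequality $\delta(K,N)\leq\delta(B_d,N)$, and the bound $\delta(B_d,N)\leq\vol_d(B_d\setminus R_N)/\kappa_d\leq C_2 d N^{-\frac{2}{d-1}}$, which is exactly Lemma \ref{lem:explicit-inscribed-upper} for $N\geq (C_0d)^{\frac{d-1}{2}}$. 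The constants $C_0,C_2$ are inherited from that lemma.

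The main work, and the expected obstacle, is Macbeath's inequality, since it is not among the results established earlier in the paper. I would prove it by Steiner symmetrization. Fix a direction $u$ and let $S_uK$ be the Steiner symmetral. The claim is that the maximal volume of an inscribed polytope with at most $N$ vertices does not decrease under $S_u$. Take $P\subset K$ with vertices $(x_i,t_i)$, written in coordinates $u^\perp\times\R u$, and let $P'$ be its reflection in $u^\perp$. Each vertex chord $\{x_i\}\times[a_i,b_i]$ of $K$ lies over the base point $x_i$. Form the polytope $Q$ whose vertices are the base points $x_i$, each moved to height $(t_i-t_i')/2$, where the $t_i'$ are chosen so that $P'$-type vertices remain in $K$ after reflection. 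Concretely, one shows that $Q\subset S_uK$, that $Q$ has at most $N$ vertices, and, by Brunn--Minkowski applied fiberwise (the fibers of $Q$ contain the averages of the fibers of $P$ and of the reflection of $P$), that $\vol_d(Q)\geq\vol_d(P)$. I expect this to be the step needing most care.

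Since $\vol_d(S_uK)=\vol_d(K)$, this yields $\delta(S_uK,N)\leq\delta(K,N)$. Choose a sequence of symmetrizations converging in Hausdorff distance to the ball $rB_d$ with $\vol_d(rB_d)=\vol_d(K)$. Upper semicontinuity of the maximal inscribed volume with $N$ vertices under Hausdorff convergence, again by compactness, then gives $\delta(K,N)\leq\delta(rB_d,N)=\delta(B_d,N)$. Combining this with Lemma \ref{lem:explicit-inscribed-upper} completes the argument.
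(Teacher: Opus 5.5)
Your overall reduction is the same as the paper's. The paper simply cites Macbeath's theorem \cite{Macbeath}, $\delta(K,N)\le\delta(B_d,N)$, and then applies Lemma \ref{lem:explicit-inscribed-upper} and compactness; had you cited it, you would be done. The gap is in your self-contained proof of Macbeath's inequality, which runs in the wrong direction.

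Write $v_N(K)$ for the maximal volume of a polytope with at most $N$ vertices contained in $K$. It is false that $v_N(S_uK)\ge v_N(K)$. Take $T=\operatorname{conv}\{(0,0),(2,0),(1,1)\}\subset\R^2$ and $u=e_2$. Then $S_uT$ is the rhombus with vertices $(0,0),(1,\pm\frac12),(2,0)$. Both sets have area $1$, but $v_3(T)=1$ while $v_3(S_uT)=\frac12$. So no construction of $Q\subset S_uK$ with $\vol_d(Q)\ge\vol_d(P)$ can work in general; your $Q$ here has vertices $(0,0),(2,0),(1,\frac12)$ and area $\frac12$. The fiberwise Brunn--Minkowski heuristic fails because a single vector of convex weights cannot realize the top of one fiber and the bottom of the other at the same time. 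Moreover, even granting $\delta(S_uK,N)\le\delta(K,N)$, iterating along symmetrizations converging to $rB_d$ would give $\delta(B_d,N)\le\delta(K,N)$. That is the reverse of what you need, so your final step does not follow.

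The correct argument goes from $S_uK$ back to $K$.
\begin{itemize}
\item Fix $x_1,\dots,x_N\in u^\perp$ and put $\Phi(t)=\vol_d(\operatorname{conv}\{x_i+t_iu\})$ for $t\in\R^N$. The projection onto $u^\perp$ is $\operatorname{conv}\{x_i\}$ for every $t$.
\item The fiber over $x$ has length $h_{\Lambda(x)}(t)+h_{\Lambda(x)}(-t)$, where $\Lambda(x)$ is the $t$-independent set of convex weights $\lambda$ with $\sum_i\lambda_ix_i=x$. Hence $\Phi$ is convex and even.
\item Let $Q=\operatorname{conv}\{x_i+s_iu\}\subset S_uK$, and let $m_i$ be the midpoint of the chord of $K$ over $x_i$. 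Then $x_i+(m_i\pm s_i)u\in K$.
\item Writing $s=\frac12(m+s)+\frac12(s-m)$ and using convexity and evenness gives $\Phi(s)\le\frac12\Phi(m+s)+\frac12\Phi(m-s)$. So one of these two polytopes in $K$ has volume at least $\vol_d(Q)$.
\end{itemize}
This gives $v_N(S_uK)\le v_N(K)$, i.e., $\delta(K,N)\le\delta(S_uK,N)$. Hausdorff continuity of $\delta(\cdot,N)$ along a symmetrization sequence then yields $\delta(K,N)\le\delta(B_d,N)$, as required.
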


\begin{proof}
    By a classical result of Macbeath \cite{Macbeath}, for every convex body $K$ in $\R^d$
    \[
\inf_{P\in\mathscr{P}_N^{\rm in}(K)}\frac{\vol_d(K\setminus P)}{\vol_d(K)} \leq \inf_{Q\in\mathscr{P}_N^{\rm in}(B_d)}\frac{\vol_d(B_d\setminus Q)}{\vol_d(B_d)} .
    \]
    Let $N\geq (C_0 d)^{\frac{d-1}{2}}$. By Lemma \ref{lem:explicit-inscribed-upper},
    \[
\inf_{Q\in\mathscr{P}_N^{\rm in}(B_d)}\frac{\vol_d(B_d\setminus Q)}{\vol_d(B_d)}\leq C_2 d\NN. 
    \]
    Therefore,
    \[
\inf_{P\in\mathscr{P}_N^{\rm in}(K)}\frac{\vol_d(K\setminus P)}{\vol_d(K)} \leq C_2 d\NN.
    \]
    Finally, by a compactness argument, there exists a best-approximating polytope $R_N\in\mathscr{P}_N^{\rm in}(K)$ achieving the infimum in the last inequality. Thus,
    \[
\vol_d(K\setminus R_N)\leq C_2 d\vol_d(K)\NN.
    \]
\end{proof}

The extension of Corollary \ref{cor:explicit-circ-mw-upper} reads as follows.

\begin{theorem}\label{thm:explicit-circ-mw-upper-cvx-bodies}
Let $K$ be a convex body in $\R^d$. There exist absolute constants $C_0,C>0$ such that, for all $N\geq (C_0d)^{\frac{d-1}{2}}$, there exists a circumscribed polytope $Q_N\in\mathcal{P}_N^{\rm out}(K)$ such that
\[
w(Q_N)-w(K)\leq Cw(K)\NN.
\]
\end{theorem}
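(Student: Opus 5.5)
The proof will mirror the inscribed case (Theorem \ref{thm:explicit-inscribed-upper-cvx-bodies}): a Macbeath-type extremality statement reduces the problem to the ball, and Corollary \ref{cor:explicit-circ-mw-upper} settles the ball.

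The reduction for the mean width of circumscribed polytopes with at most $N$ facets is the result of Schneider (also Glasauer--Gruber). It says that the ball is the worst case for the normalized deviation: for every convex body $K$ in $\R^d$,
\[
\inf_{Q\in\mathcal{P}_N^{\rm out}(K)}\frac{w(Q)-w(K)}{w(K)}\leq \inf_{Q\in\mathcal{P}_N^{\rm out}(B_d)}\frac{w(Q)-w(B_d)}{w(B_d)}.
\]
If I could not cite this directly, I would prove it by Steiner symmetrization, as in Macbeath's argument. Mean width is linear under Minkowski addition. Moreover, $h_Q\geq h_K$ is preserved under the averaging used in symmetrization, if one works with support functions of the form $\frac12(h_Q(u)+h_{Q}(\bar u))$ (Minkowski symmetrization with respect to a reflection). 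Such averaging keeps the number of facets at most $N$ only up to doubling, so the cleaner route is to cite Schneider's theorem.

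Given the reduction, I would take $N\geq (C_0d)^{\frac{d-1}{2}}$ and apply Corollary \ref{cor:explicit-circ-mw-upper}. Since $w(B_d)=2$, this gives a right-hand side at most $\frac{C}{2}\NN$. Hence
\[
\inf_{Q\in\mathcal{P}_N^{\rm out}(K)}\bigl(w(Q)-w(K)\bigr)\leq \tfrac{C}{2}\,w(K)\NN.
\]

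Finally, a compactness argument shows that the infimum is attained. One may restrict to polytopes $Q\supset K$ with at most $N$ facets and with $w(Q)$ bounded, say $w(Q)\leq 2w(K)$. Such $Q$ lie in a fixed bounded set: every support value is at most $w(Q)$ plus a constant depending on $K$, using $h_Q\geq h_K$. Their facet normals and offsets then range over a compact set. Limits of such polytopes are polytopes containing $K$ with at most $N$ facets, possibly degenerate facets, which only reduces the count. Mean width is continuous in the Hausdorff metric. This produces $Q_N\in\mathcal{P}_N^{\rm out}(K)$ with $w(Q_N)-w(K)\leq Cw(K)\NN$, after renaming constants.

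The main obstacle is the extremality step. The statement must be exactly Schneider's: the ball maximizes the normalized best-approximation error for mean width with circumscribed polytopes and a fixed number of facets. I will rely on the literature for it rather than reprove it. A proof via symmetrization is delicate because the facet count has to be preserved.
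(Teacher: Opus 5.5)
Your proposal is correct and follows the paper's proof: cite Schneider's extremality inequality, apply Corollary~\ref{cor:explicit-circ-mw-upper} with $w(B_d)=2$, and then extract a polytope achieving the bound.

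The only difference is the final step. The paper skips compactness and simply takes a polytope whose error is within a factor of $2$ of the infimum; your slack of $\frac{C}{2}$ versus $C$ already permits this. If you keep the compactness argument, the bound on support values should read $c(d)\,w(Q)+c(K)$ rather than $w(Q)+c(K)$, since a long thin $Q$ has $h_Q$ of order $\sqrt{d}\,w(Q)$ in its long direction. This is harmless for fixed $d$.
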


\begin{proof}
    By a classical result of Schneider \cite{Schneider-1967}, for every convex body $K$ in $\R^d$
    \[
\inf_{P\in\mathscr{P}_N^{\rm out}(K)}\frac{w(P)-w(K)}{w(K)} \leq \inf_{Q\in\mathscr{P}_N^{\rm out}(B_d)}\frac{w(Q)-w(B_d)}{w(B_d)} .
    \]
   % The rest of the proof proceeds in the same way as that of Theorem \ref{thm:explicit-inscribed-upper-cvx-bodies}, only now we invoke Corollary \ref{cor:explicit-circ-mw-upper} rather than Lemma \ref{lem:explicit-inscribed-upper}.

    \noindent Since $w(B_d)=2$, Corollary~\ref{cor:explicit-circ-mw-upper} yields
\[
\inf_{Q\in\mathcal{P}_N^{\rm out}(B_d)}
\frac{w(Q)-w(B_d)}{w(B_d)}
\leq C\NN.
\]
Consequently,
\[
\inf_{P\in\mathcal{P}_N^{\rm out}(K)}
\bigl(w(P)-w(K)\bigr)
\leq Cw(K)\NN.
\]
To complete the proof, we choose a polytope whose error is within a factor of $2$ of this infimum, and absorbing the factor into $C$ proves the claim.
\end{proof}

\begin{comment}
\begin{remark}
The inequalities of Macbeath \cite{Macbeath} and Schneider \cite{Schneider-1967} have inspired several extensions beyond their original Euclidean setting. Geometric analogues have been established in spherical space \cite{BHLL-IUMJ-2026} and in finite-dimensional normed spaces \cite{Langi-Wang-2026}. There are also functional counterparts: extensions of Macbeath's theorem and Schneider's theorem were recently obtained in \cite{Hoehner-Novaes} and \cite{Hoehner-Mussnig}, respectively. For related extremal results in geometric and functional settings, see also, e.g.,  \cite{BucurFragalaLamboley,Chen-TAMS-2018,AFlorian1992,FlorianExtremum,FlorianPrachar,Hoehner-simplices-2026,Hoehner-studia-2026}.
\end{remark}
\end{comment}

%%%%%%%%%%%%%%%%%%%%%%%%%%%%%%

\section{Discussion}\label{sec-discussion}

The results of this paper complement the intrinsic volume approximation results obtained in \cite{HSW-2026}. In that paper, lower bounds for ordinary intrinsic volume approximation were obtained by combining endpoint estimates for vertices and facets with the combinatorial inequalities \eqref{vertices-bd} and \eqref{facets-bd}. 
The present paper shows that the same combinatorial mechanism is also useful for radial approximation and analytic dual volume deviations. The geometry, however, is different: the proofs here are based on finding a large set of directions on which the radial function of an inscribed polytope is bounded away from the radial function of the Euclidean ball.

One feature of Theorem \ref{inscribed-vol-kfaces} is the appearance of the minimum
\[
\min\left\{
1, \frac{d}{2}\left(\frac{\omega_d}{4\kappa_{d-1}}\right)^{\frac{2}{d-1}}\MM\right\}.
\]
This is unavoidable in a nonasymptotic estimate for the volume deficit of an inscribed polytope. Indeed, since $P_M\subset B_d$, the left-hand side is always bounded above by $\kappa_d$, whereas the expression $d\kappa_d\MM$ can clearly be larger than a constant multiple of $\kappa_d$ when $M$ is small relative to $d$, e.g., $M\sim Cd$ for some $C>1$ when $k=0$. Thus, the capped form of Theorem \ref{inscribed-vol-kfaces} is the natural formulation of the lower bound without a large-$M$ hypothesis.

It would be interesting to understand whether the minimum in Theorem \ref{circumscribed-mw-kfaces} is an artifact of the proof or is also the natural nonasymptotic form of the corresponding circumscribed mean width estimate. Our proof of Theorem \ref{circumscribed-mw-kfaces} proceeds by polarity from Theorem \ref{inscribed-vol-kfaces}; hence the minimum is inherited from the inscribed volume estimate. Since circumscribed polytopes can have large excess volume and large mean width excess when the number of $k$-faces is small, it is possible that a sharper direct argument could remove the capped form in Theorem \ref{circumscribed-mw-kfaces} (see also \cite{HSW-2026}).

The methods in the present paper and \cite{HSW-2026} give estimates for only one half of the $f$-vector in each polarity class. For inscribed polytopes, our results apply to $0\leq k\leq \lfloor d/2\rfloor$, because in this range the inequality $f_k(P)\geq f_0(P)$ allows one to reduce the problem to the vertex case. By polarity, this yields the corresponding circumscribed results for the range $\lceil d/2\rceil-1\leq k\leq d-1$. Hence, the remaining values of $k$ are not covered by the elementary $f$-vector inequalities \eqref{vertices-bd} and \eqref{facets-bd} used here and in \cite{HSW-2026}.

A natural open problem is therefore the following: prove lower bounds of the same order for the remaining values of $k$. More precisely, is it true that for every $k\in\{0,1,\ldots,d-1\}$ and every polytope $P_M\subset B_d$ with at most $M$ $k$-faces, we have
\[
\vol_d(B_d\setminus P_M)
\geq c\kappa_d
\min\left\{1,
d\left(\frac{\omega_d}{4\kappa_{d-1}}\right)^{\frac{2}{d-1}}\MM\right\}
\]
for some absolute constant $c>0$? More generally, does the analytic dual volume estimate of Theorem \ref{q-dual-volume-kfaces} hold for all $k\in\{0,1,\ldots,d-1\}$?

One possible route toward such a result would be to prove a radial analogue of the conjectural geometric estimate discussed in \cite{HSW-2026}. Namely, one would like to show that if an inscribed polytope has too few $k$-faces, then a positive proportion of directions on the sphere must have a radial deficit of the order $\left(\frac{\omega_d}{\kappa_{d-1}M}\right)^{\frac{2}{d-1}}$. 
Such a statement would replace the vertex covering argument used in the proof of Theorem \ref{q-dual-volume-kfaces}, and would allow one to treat the remaining part of the $f$-vector.

%Another natural direction concerns sharp asymptotics of best-approximating polytopes. In Theorem \ref{asymptotic-dual-kfaces}, we obtained lower bounds by combining Theorem 21 of \cite{BHK} with the inclusions induced by the $f$-vector inequalities \eqref{vertices-bd} and \eqref{facets-bd}. These arguments do not give matching upper bounds for intermediate $k$-faces. It would be interesting to determine whether the asymptotic constants from the vertex and facet models remain sharp for the corresponding $k$-face classes, or whether the restriction on intermediate-dimensional faces leads to different extremal behavior.

Finally, while the  estimates in this paper are proved for the Euclidean ball, a natural problem is to obtain  analogues for broader classes of convex bodies, such as those with $C^2$ boundary and everywhere positive Gauss--Kronecker curvature. In this setting, the local curvature and the radial position of the boundary should play a role analogous to the curvature-weighted expressions appearing in the asymptotic theory.

%%%%%%%%%%%%%%%%%%%%%%%%%%%%%%%%%%%
%\section{Possibly new}
\begin{comment}
{\color{blue}
Set $\widetilde{\eta}_{d,N}:=\frac{2}{3}\eta_{d,N}$. For $Q_N\in\mathcal{P}_N^{\rm out}(B_d)$, define the set
\[
\mathcal{U}_{Q_N}:=\{u\in\Sp:\,\rho_{Q_N}(u)> 1+\widetilde{\eta}_{d,N}\}.
\]
As shown in \cite{HSW-2026}, 
\begin{equation}
    \sigma(\mathcal{U}_{Q_N})\geq \frac{3}{4}. 
\end{equation}
Therefore, using also the inequality $(1+x)^j\geq 1+jx$ for $x,j\geq 0$ we get
\begin{align*}
    \widetilde{\Delta}_j(B_d,Q_N) &=\widetilde{V}_j(B_d)\int_{\Sp}\bigl(\rho_{Q_N}(u)^j-1\bigr)\,d\sigma(u)\\
    &\geq \widetilde{V}_j(B_d)\int_{\mathcal{U}_{Q_N}}\bigl(\rho_{Q_N}(u)^j-1\bigr)\,d\sigma(u)\\
    &>\widetilde{V}_j(B_d)\left[(1+\eta_{P_N})^j-1\right]\sigma(\mathcal{U}_{Q_N})\\
    &\geq \frac{3j}{4}\widetilde{V}_j(B_d)\widetilde{\eta}_{d,N}.
\end{align*}
}
\end{comment}

%%%%%%%%%%%%%%%%%%%%%%%%%%%%%%%%%%%
\section*{Acknowledgments}

Elisabeth Werner was supported by NSF grant DMS-2506790 and by Simons Fellowship SFI-MPS-SFM-00020853. She also 
wants to thank the University of M\"unster (Germany) for their hospitality. It was during her stay there as a M\"unster Research Fellow that part of the work was completed.

%%%%%%%%%%%%%%%%%%%%%%%%%%%%%%%%%
\bibliographystyle{plain}
\bibliography{main}

%%%%%%%%%%%%%%

\vspace{3mm}
\noindent
Steven Hoehner\\
\noindent {\sc Department of Mathematics \& Computer Science, Longwood University, 201 High Street, Farmville, VA 23909, U.S.A.}\\
\noindent {\it E-mail address:} {\tt hoehnersd@longwood.edu}
\vskip 3mm
\noindent
Carsten Sch\"utt\\
\noindent {\sc Department of Mathematics, University of Kiel, Heinrich-Hecht-Platz 6, 24118 Kiel, Germany} \\
\noindent {\it E-mail address:} {\tt schuett@math.uni-kiel.de} 
\vskip 3mm
\noindent
Elisabeth Werner\\
\noindent {\sc Department of Mathematics, Case Western Reserve University, 2145 Adelbert Road, Cleveland, OH 44106, U.S.A.}\\
\noindent {\it E-mail address:} {\tt elisabeth.werner@case.edu}

%%%%%%%%%%%%%%%%%%%%%%%%%%%%%%%%
\end{document}